\crefname{hypothesis}{Hypothesis}{Hypotheses}
\title{Alternating steepest descent methods for tensor completion with applications to spectromicroscopy
}
\author{Oliver Townsend\thanks{Department of Mathematical Sciences, University of Bath, UK 
  (\email{ot280@bath.ac.uk}, \email{sd901@bath.ac.uk}, \email{sg9680@bath.ac.uk}).}
\and Sergey Dolgov\footnotemark[2]
\and Silvia Gazzola\footnotemark[2]
\and Misha Kilmer\thanks{Department of Mathematics, Tufts University, Medford, MA 02155 (\email{misha.kilmer@tufts.edu}).}}
\newcommand{\starM}{\star_{\mbox{\scriptsize{M}}}}
\begin{document}

\maketitle

\begin{abstract}
In this paper we develop two new Tensor Alternating Steepest Descent algorithms for tensor completion in the low-rank $\starM$-product format, whereby we aim to reconstruct an entire low-rank tensor from a small number of measurements thereof. Both algorithms are rooted in the Alternating Steepest Descent (ASD) method for matrix completion, first proposed in [J. Tanner and K. Wei, \emph{Appl. Comput. Harmon. Anal.}, 40 (2016), pp. 417-429]. In deriving the new methods we target the X-ray spectromicroscopy undersampling problem, whereby data are collected by scanning a specimen on a rectangular viewpoint with X-ray beams of different energies. The recorded absorptions coefficients of the mixed specimen materials are naturally stored in a third-order tensor, with spatial horizontal and vertical axes, and an energy axis. To speed the X-ray spectromicroscopy measurement process up, only a fraction of tubes from (a reshaped version of) this tensor are fully scanned, leading to a tensor completion problem. 
In this framework we can apply any transform (such as the Fourier transform) to the tensor tube by tube, providing a natural way to work with the $\starM$-tensor algebra, and propose: (1) a tensor completion algorithm that is essentially ASD reformulated in the $\starM$-induced metric space and (2) a tensor completion algorithm that solves a set of (readily parallelizable) independent matrix completion problems for the frontal slices of the transformed tensor. 
The two new methods are 
tested on 
real X-ray spectromicroscopy data, demonstrating that they achieve the same reconstruction error with fewer samples from the tensor compared to the matrix completion algorithms applied to a flattened tensor. 
\end{abstract}

\begin{keywords}
Matrix completion, tensor completion, $\starM$-product, low-rank tensor decompositions, alternating steepest descent, X-ray spectromicroscopy
\end{keywords}

\begin{MSCcodes}
15A69,     
15A23,      
65F55,  	
65F22,  	
65K10  	    
\end{MSCcodes}

\section{Introduction}
\label{sec:intro}

Matrix and tensor completion problems aim at recovering (or approximating) all entries of an array (matrix or tensor) from a (very) small portion of its entries. Matrix and tensor completion problems appear in a variety of applications such as (just to name a few): recommender systems (including the so-called `Netflix problem') \cite{reccomender}, triangulation from incomplete data \cite{triangulation}, image and video data denoising and inpainting \cite{RPCP, Zhang1}, hyperspectral imaging \cite{Ely}. In this paper we are particularly concerned with applications in undersampled X-ray spectromicroscopy (or, briefly, spectromicroscopy), whereby one wants to infer the chemical composition of a specimen by scanning a (much) reduced set of all possible combinations of spatial pixels and energies. The two main, interlinked, reasons for undersampling spectromicroscopy data are to speed up the measurement process and to mitigate degradation of the specimen due to high X-ray dose.

Formulated this way, matrix and tensor completion are ill-posed inverse problems. To resolve them, the usual approach is to assume that the sought matrix or tensor has low rank(s). For tensors, there exist different definitions of rank.
The most direct extension of the matrix low-rank (skeleton) decomposition to tensors is
the so-called Canonical Polyadic decomposition \cite{CPD}, in which the tensor is decomposed into a sum of rank one tensors, where a rank one tensor is an outer product of fibers (i.e. vectors oriented in each of the different dimensions). 
But there are many other tensor decompostions, and each has its own 
notion of rank.  Some of the other well-known tensor decompositions include 
Tucker \cite{tucker},
Tensor-Train \cite{TT},
Hierarchical Tucker \cite{hk-ht-2009,gras-hsvd-2010},
and $\starM$-product rank \cite{mstar}. 
Which decomposition to use is highly application specific.  

In this work, we make use of low tensor $\starM$ product ranks, which we will show to be particularly well suited to the problem of completion in X-ray spectromicroscopy.
To motivate our approach, we note that the current state-of-the-art pipeline, shown in \cref{fig:schematics}, involves storing data in a 3D-tensor, before flattening it to apply matrix analysis techniques such as Principal Component Analysis (PCA) and K-Means clustering; see, e.g., \cite{LEROTIC200435}.  
This data tensor is usually rather large.
XANES experiments, for example, are of the order of $150\times 100\times 100$;
correspondingly, the flattened matrix is of size $150\times 10000$.
Existing undersampling approaches involve applying matrix completion algorithms to the flattened matrix. 
\begin{figure}
    \centering
    \includegraphics[width=0.75\textwidth]{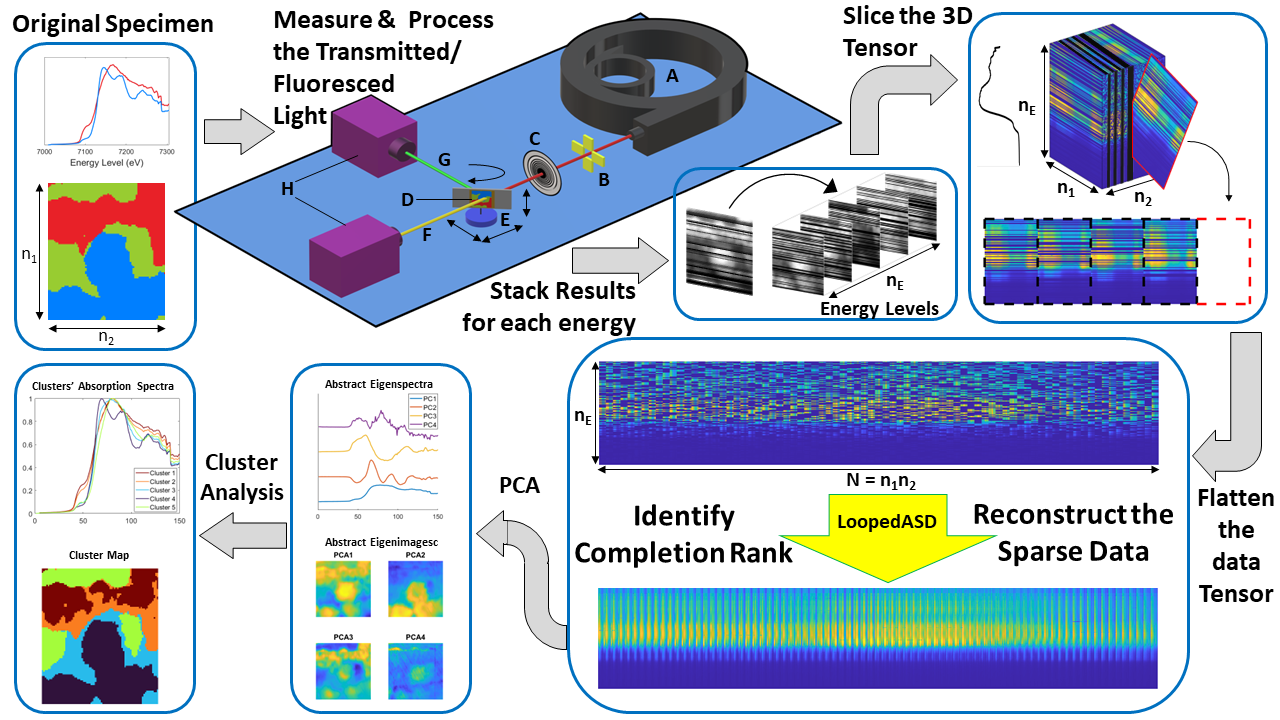}
    \caption{Schematics illustrating the pipeline for X-ray spectromicroscopy data processing.}
    \label{fig:schematics}
\end{figure}
An informal reasoning why X-ray spectromicroscopy data are particularly suitable for low rank matrix completion is that the specimen contains only a few materials exhibiting significantly different spectral responses.
Without noise, each column of a tensor would be a linear combination of those responses, making the whole tensor low-rank.
Since the real data is noisy (due to the instrument error, specimen drift and degradation due to, e.g., changing temperature), the tensor is only approximately low-rank.
Nevertheless, \cite{undersampling} demonstrated that the X-ray spectromicroscopy data are indeed low rank up to a small error.

Matrix (and tensor) completion may depend significantly on the distribution of the known entries (sampling pattern).
One of the most studied (and usually successful) distributions for matrix completion is the Bernoulli distribution.
However, in spectromicroscopy it is more efficient to raster scan the specimen, where the scanner is moved continuously across each row of the specimen.
This, and other reasons, make the $\starM$-product decomposition particularly suitable for spectromicroscopy. Indeed, first, without loss of generality, we can align the tube fibres (i.e. the direction over which we apply the transform) with the direction of the raster sampling. This guarantees that for each known entry we also know the entire tube. Thus, we can apply any transform (e.g. the regular FFT) to each of the known tubes. Second, each frontal slice is more `square' compared to the flattened tensor, which usually admits lower undersampling ratios, although this effect may be offset slightly since the frontal slices are smaller overall. Third, in this setting, the indices of known entries in each frontal slice are sampled from the Bernoulli distribution (in contrast to a raster sampling pattern in the flattened tensor), which may again admit lower undersampling ratios. Finally, suitable transforms will compress the data, so that fewer components need to be recovered for an accurate completion.

As noted above, there are many different types of tensor decompositions and the specification of the tensor completion problem depends on the decomposition chosen.  While some decompositions such as CP and Tucker are orientation independent (i.e. the dimensions are not preferentially treated, so the decomposition is fundamentally the same under permutation of the indices), the $\starM$-product decomposition is orientation dependent.  Significant compressibility of the tensor is possible under the $\starM$ framework when the data can be arranged to take advantage of the orientation dependence.  
Under the $\starM$ framework, tensors can be decomposed in a natural way with provable Eckart–Young optimality with the approximation error not greater than the matrix approximation error \cite{mstar}.  
Unlike other tensor decomposition methods, the $\starM$ framework in the third-order tensor case allows the generalization of all classical algorithms from linear algebra, such as a direct analogue of a singular value decomposition for tensors. 

Many low rank matrix completion algorithms have been published in recent years. These algorithms make use of a range of techniques to achieve accurate recovery of the data, including nuclear norm minimisation \cite{svt} and alternating steepest descent \cite{ASD}. Indeed, we found the latter to perform the best with real data, which lead to the development of LoopedASD \cite{undersampling}, outlined in \cref{sect: loopedASD}.
We will develop our new tensor completion algorithm TASDII based on LoopedASD,
and demonstrate that it is actually more accurate on certain data sets.

The paper is organised as follows. In \cref{sec:background} we report some background material, including mathematical formulations of the matrix and tensor completion problems, relevant methods for matrix completion and the tensor framework adopted in the paper. Our main contributions are presented in \cref{sec:main}, where we present our tensor completion algorithms TASD and TASDII. Experimental results follow in \cref{sec:experiments}, and finally we write our conclusions in \cref{sec:conclusions}. Throughout  the paper we explain relevant notations as soon as they are introduced;  here we just clarify that we use MATLAB-like notation for entries of third-order tensors where, in particular, $A_{i ,j ,k}$ denotes the entry at row $i$ and column $j$ of the matrix going $k$ ``inward''.

\section{Background}
\label{sec:background}
In this section we provide some necessary background by formally stating the matrix and tensor completion problems, by discussing a couple of alternating steepest descent algorithms for matrix completion, and by reviewing the $\starM$ framework: a known matrix-mimetic tensor-algebraic framework whereby the two new alternating steepest descent algorithms for tensor completion will be formulated. 

\subsection{Matrix and tensor completion}

Considering the matrix framework first, let $A \in \C^{n_1\x N}$ be a matrix of low rank $\widetilde{r} \ll \min(n_1,N)$. 
Let $\Omega \subset [n_1] \x [N]:=\{1,2,\dots,n_1\}\x\{1,2,\dots,N\}$ be a set of indices, also referred to as the \textit{sampling pattern}, for which we define the \textit{sampling operator} $\sampleOP:\C^{n_1\x N} \rightarrow \C^{n_1\x N}$, such that
\begin{equation}
\sample{A} =
    \begin{cases}
     A_{ij}           & \text{if }\quad (i,j) \in \Omega,\\
    0                 & \text{otherwise}.\\
    \end{cases}
\end{equation}
Note that $\sampleOP$ is the orthogonal projection on the space supported by $\Omega$. We denote the acquired (sparse) samples of $A$ as $D = \sample{A}$. 
The standard low rank matrix completion problem involves computing the matrix of minimal rank such that it is equal to $D$ when sampled on $\Omega$. In formulas: given a (sparse) sample $D$ of $A$, compute
\begin{equation} \label{eq:LRCP}
    Z^* = \argmin_{Z\in \C^{m\x n}} \ \text{rank}(Z),\qquad \text{subject to} \qquad  \mathcal{P}_\Omega (Z) = D.
\end{equation}
This problem is non-convex and NP hard, so practical approaches estimate its solution by imposing some additional constraints: this is the case of the alternating steepest descent (ASD) algorithm \cite{ASD}, which is summarized in the next subsection and which is one of the main building blocks of the new methods derived in this paper. Other algorithms to approximate the solution to \eqref{eq:LRCP} consider some convex relaxations: this is the case for \cite{svt,RPCP,convex_matrix_completion,SVP}. It can be proved theoretically that, with a sufficient number of samples and possible under some additional assumptions on the involved matrices, these approaches manage to compute a good approximation of $Z^\ast$ and, in turn, of $A$. In practice, exact low rank matrices are rare, so often we are considering matrices that are approximately low rank, or have low numerical rank or $\epsilon$-rank, and whose entries may be corrupted by noise. 

Switching now to the tensor framework, let $\euA \in \C^{n_1\x n_2\x n_3}$ be a third-order tensor of low `rank' $t$ (the notions of tensor rank considered here will be made precise in \cref{def:tensorranks}). Let $\Omega \subset [n_1]\x[n_2]\x[n_3]$ be a set of indices, and define the \emph{tensor sampling operator} $\mathcal{P}_\Omega:\C^{n_1\x n_2\x n_3} \rightarrow \C^{n_1\x n_2\x n_3}$ as the orthogonal projection onto the space supported by $\Omega$, i.e.,
\begin{equation}
    [\sample{\euA}]_{ijk} = 
    \begin{cases}
        \euA_{ijk} & \text{if} \quad (i,j,k) \in \Omega,\\
        0 & \text{otherwise}.
    \end{cases}
\end{equation}
Similar to the matrix case, we denote the acquired (sparse) samples of $\euA$ as $\euD = \sample{\euA}$,  i.e., we only have access to entries $\euD = \sample{\euA}$. The basic low rank tensor
completion problem is formulated as follows: Given a (sparse) sample $\euD$ of $\euA$, compute
\begin{equation}
    \euZ^*=\argmin_{\euZ \in \C^{n_1 \x n_2 \x n_3}} \text{`rank'}({\euZ}),\qquad \text{subject to}\qquad \sample{\euZ} = \euD.
\end{equation}

One of the key parameters for the analysis of, and the performance of algorithms for, matrix and tensor completion is the \emph{undersampling ratio} $p$, 
defined as
\begin{equation}
    p = \frac{|\Omega|}{n_1 N}\quad \text{for matrices}\quad\text{and}\quad p = \frac{|\Omega|}{n_1 n_2 n_3}\quad \text{for tensors},
\end{equation}
where $|\Omega|$ denotes the cardinality of the set $\Omega$. 

$\Omega$ is commonly chosen at random. Most of the literature (theory and algorithms) for matrix completion or tensor completion considers 
$\Omega$ to be Bernoulli sampled, whereby one picks each tuple $(i,j)$ (in the matrix setting) or $(i,j,k)$ (in the tensor setting) with probability $p$, leading to an expected undersampling ratio $p$. 

However, in spectromicroscopy and other imaging applications, when undersampling is performed to speed the measurement process up or when it is otherwise necessary, the samples should typically obey some 
constraints dictated by the functioning of the data acquisition machinery. Therefore one should realistically deal with random structured undersampling. In this paper we consider the so-called \emph{raster samples}. Namely, in the natural third-order representation for spectromicroscopy (and possibly other imaging) data, the sampling pattern $\Omega$ is defined by varying $i\in[n_1]$ and $k\in[n_3]$, and by taking $(i,j,k)=1$ with probability $p$ for all $j\in [n_2]$. Although the expected undersampling ratio for such raster scans is $p$, it can be shown that the probability of the sampled tensor to have $(i,j,k)=0$ for all $i\in [n_1]$ and $j\in [n_2]$ increases dramatically with respect to Bernoulli sampling at lower undersampling ratios. Since, in these situations, we have a zero horizontal slice in the tensor data (or a zero row in the matrix representation of it -- see \cref{sec: tensors} for more details about the relations between these two formats), which implies that  the associated tensor (or matrix completion) problem is impossible to solve, 
we consider instead a so-called \emph{robust raster sampling}. Namely, while varying $i\in [n_1]$ and $k\in [n_3]$, we record which indices $i_{\text{sample}}$ have been sampled (i.e., $(i_{\text{sample}},j,k)=1$ for all $j\in [n_2]$) and do not allow the same $i_{\text{sample}}$ to be sampled again until all $i\in [n_1]$ have been sampled. 
This sampling pattern also ensures that measurements are more spread out overall.

\subsection{ASD and LoopedASD for matrix completion}\label{sec:ASD}
The Alternating Steepest Descent (ASD) algorithm \cite{ASD} approximates the solution of \cref{eq:LRCP} by explicitly imposing the iterates to be of the form $Z = XY$, with $X \in \C^{n_1\x r}$ and $Y \in \C^{r\x N}$ of full rank $r\ll\min\{n_1,N\}$, so that they belong to the manifold of rank $r$ matrices, for a fixed $r$. In this setting, ASD solves
\begin{equation}\label{eq:completion problem}
    \min_{Z \in \C^{m \x n}}f(X,Y),\quad\mbox{where}\quad f(X,Y)=\frac{1}{2}||D - \sample{XY}||^2_F\,,
\end{equation}
by applying steepest descent with exact line search alternately to the factors $X$ and $Y$. Writing $f(X,Y)$ as $f_X(Y)$ for fixed $X$ and $f_Y(X)$ for fixed $Y$, we can explicitly compute the gradients $\nabla f_Y(X),\ \nabla f_X(Y)$, and steepest descent stepsizes $\eta_X,\ \eta_Y$ as
\begin{alignat}{3}
    \nabla f_Y(X) &= -(D - \sample{XY})Y^{\rm H}, \quad && \text{and} \quad   \nabla f_X(Y)&&= -X^{\rm H}(D - \sample{XY}),\label{eq:ASD steps1}\\
    \eta_X &= \frac{||\nabla f_Y(X)||^2_F}{||\sample{\nabla f_Y(X)Y}||^2_F}, && \text{and} \ \quad\  \eta_Y&&= \frac{||\nabla f_X(Y)||^2_F}{||\sample{X\nabla f_X(Y)}||^2_F},\label{eq:ASD steps2}
\end{alignat}
where we use $X^{\rm H}$ to denote the Hermitian transpose. 
Using the formulas above, and starting from initial guesses $X_0 \in \C^{n_1\x r}$ and $Y_0 \in \C^{r\x N}$ for the two factors of $Z$, the $(i+1)$th iteration of ASD, $i=0,1,2,\dots$ consists of the following basic steps:
\begin{equation}
    \begin{cases}
        \text{Fixing }\ Y_i,\ \text{ compute }\ \nabla f_{Y_i}(X_i)\ \text{ and }\ \eta_{X_{i}}\\
        \text{Update }\ X_{i+1} = X_{i} - \eta_{X_{i}}\nabla f_{Y_i}(X_i)\\
        \text{Fixing}\ X_{i+1},\ \text{ compute }\ \nabla f_{X_{i+1}}(Y_i)\ \text{ and }\ \eta_{Y_{i}}\\
        \text{Update }Y_{i+1} = Y_{i} - \eta_{Y_{i}}\nabla f_{X_{i+1}}(Y_i).
    \end{cases}\label{eq:ASD}
\end{equation}

We stop ASD at the $i_{\text{stop}}$th iteration if one of the following conditions is satisfied: (1) $i_{\text{stop}}$ equals the maximum number of allowed iterations; (2) a tolerance for the relative residual is reached norm; (3) a tolerance on the difference between the residual at the $i_{\text{stop}}$th iteration and the residual at the $(i_{\text{stop}}-50)$th iteration is reached (so that the algorithm is stopped when the convergence rate slows to nearly zero). Conditions (2) and (3) are cheap to evaluate using the following update formula for the residuals
\begin{align}\label{eq:res update}
    D - \sample{X_{i+1} Y_i}
    = \big(D - \sample{X_i Y_i}\big) +\ \eta_{X_i} \sample{\nabla f_{Y_i}(X_i)Y_i}.
\end{align}
We denote the ASD outputs as $X^\ast$ and $Y^\ast$. 

ASD was chosen because it is particularly quick and efficient. When applied optimally, it has a per-iteration cost of only $8|\Omega|r$ and was found to be much more effective when completing larger, noisy data sets. For the purpose of completing real, physical data sets with a low $\epsilon$-rank, ASD performed better by computing accurate approximations while limiting the complexity of the data. 

Despite this, there are some implementation issues associated with ASD. In particular, one must choose an appropriate completion rank $r$ to obtain the best results. This parameter limits the size of the factors, thus if $r$ is set too low the true variation of the data can not be captured. Setting $r$ too high will result in longer computation times and overfitting of noisy components. In addition, ASD requires the initial factors to be scaled appropriately, and will struggle to accurately complete data whose completion subspace has a large condition number.

LoopedASD was proposed in \cite{undersampling} as an ASD variant equipped with a rank estimation procedure that enables the algorithm to be implemented in an automatic fashion. With respect to the original LoopedASD version in \cite{undersampling}, here we consider also appropriate scaling and orthogonalization of the (intermediate) low-rank factors (for estimating the final completion rank and for performing the final completion step, respectively), resulting in a reliable, robust and accurate algorithm that avoids convergence to spurious local minima and outperforms ASD even when completing poorly scaled sparse matrices, data with high condition numbers, and particularly at lower undersampling ratios. More details about LoopedASD are given in appendix \cref{sect: loopedASD}.

\subsection{Tensor framework: $\starM$-product, t-SVDM, tensor ranks and inner products}\label{sec: tensors} 

In this paper we will use MATLAB notation to describe the entries, vectors and slices taken from tensors, as well as the reshaping of tensors into matrices and vice versa. For a tensor $\euA \in \C^{n_1 \x n_2\x n_3}$, we denote the entry on the $i^{th}$ row, the $j^{th}$ column, and $k^{th}$ matrix back by $\euA_{i,j,k} \in \C$. Of particular importance are the \emph{tube fibres}, $\euA_{i,j,:}\in \C^{n_3}$, the vectors defined by fixing the first two indices. By fixing just one index one can define the following matrices: the \emph{horizontal slices}, $\euA_{i,:,:}\in \C^{n_2\x n_3}$, the \emph{lateral slices}, $\euA_{:,j,:}\in \C^{n_1\x n_3}$, and the \emph{frontal slices}, $\euA_{:,:,k} \in \C^{n_1\x n_2}$.

On occasion, we will reformat datasets from tensors to matrices and from matrices back to tensors. The \emph{reshape}$(A,sz)$ function restructures the data as in MATLAB, so that the output will have size $sz$. When flattening a tensor, we stick each frontal slice to the right of the previous one; one can also consider vectorising each horizontal slice to create the corresponding row of the matrix. For example, for $\euA \in \C^{n_1 \x n_2\x n_3}$, we define the matrix
\begin{equation}\label{eq:reshape}
    \reshape(\euA,[n_1,n_2n_3]) = A \in \C^{n_1\x n_2n_3},
\end{equation}
where
\begin{equation}
    A_{i,:} = \text{vec}(\euA_{i,:,:}),\quad \forall i = 1,...,n_1 
\end{equation}
where $vec$ vectorises the horizontal slices of $\euA$. The reverse process will reshape the matrix into a tensor, i.e.
\begin{equation}\label{eq:flatten}
    \reshape(A,[n_1,n_2,n_3]) = \euA \in \C^{n_1\x n_2\x n_3},
\end{equation}

We can also map between $n_1\x n_3$ matrices and lateral slices, $\euA_{:,j,:}$, by twisting and squeezing so that for $A \in \C^{n_1 \x n_3}$
\begin{equation}
    \euA = \text{twist}(A) \qquad \text{and} \qquad A = \text{sq}(\euA)
\end{equation}

Defined in \cite{mstar}, the $\starM$-product acts as a generalisation of the t-product seen in \cite{tprod, third_order_tensors_as_operators} by substituting the Fourier transform for any invertible (later restricted to unitary) matrix, $M \in \C^{n_3\x n_3}$. The core idea of this framework is that tensor operations reduce to applying the equivalent matrix operations on the frontal slices in the transform domain, hence `matrix mimetic'. Details on other useful results can be found in the Appendix.

\begin{definition}[Tensor transform]
    For tensor $\euA \in \C^{n_1\x n_2\x n_3}$, we define the mode-3 unfolding as,
    \begin{equation}
        \euA_{(3)} = \left[sq(\euA_{:,1,:})^T,...,sq(\euA_{:,p,:})^T \right]
    \end{equation}
    Essentially, this unfolding reshapes the tensor $\euA$ so that the tube fibres are positioned as the columns of the unfolded matrix.

    Let $M \in \C^{n_3\x n_3}$ be an invertible matrix. We define the \emph{transformed tensor} as,
    \begin{equation}
        \hat{\euA} = \euA \x_3 M = \reshape(M \euA_{(3)},[n_1,n_2,n_3]).
    \end{equation}
\end{definition}    

We have effectively applied the transform matrix $M$ along each of the (3-mode) tube fibres, before reshaping the resulting matrix back into a tensor. We will denote the transform with a circumflex.

\begin{definition}[$\starM$-product]
\label{def:Mprod}
    For $\euA \in \C^{n_1\x n_2\x n_3},\ \euB \in \C^{n_2\x n_4\x n_3}$, and transform matrix $M \in \C^{n_3\x n_3}$, the $\starM$-product $\euC = \euA \starM \euB$ is of size $\euC\in \C^{n_1\x n_4\x n_3}$, and each frontal slice is defined in the transform domain,
    \begin{equation}
        \hat{\euC}_{:,:,k} = \hat{\euA}_{:,:,k}\; \hat{\euB}_{:,:,k},
    \end{equation}
    for $k = 1,...,n_3$.
\end{definition}

Using the $\starM$ framework, one can define many useful properties.
\begin{definition}[Tensor Conjugate Transpose]
    For $\euA \in \C^{n_1\x n_2\x n_3}$, we obtain its conjugate transpose $\euA^{\rm H} \in \C^{n_2\x n_1\x n_3}$ by taking the Hermitian of each frontal slice in the transform domain,
    \begin{equation}
        (\hat{\euA}^{\rm H})_{:,:,k} = (\hat{\euA}_{:,:,k})^{\rm H}
    \end{equation}  
\end{definition}
\begin{definition}[Identity Tensor]
    Similarly the identity tensor is defined,
    \begin{equation}
        (\euI \x_3 M)_{:,:,k} = \hat{\euI}_{:,:,k} = I,\quad \forall k \in [n_3]
    \end{equation}
\end{definition}

\begin{definition}[Orthogonal Tensor]
    A tensor $\euQ \in \C^{m \x m \x n}$ is $\starM$-unitary if 
    \begin{equation}
        \euQ^{\rm H} \starM \euQ = \euI = \euQ \starM \euQ^{\rm H}.
    \end{equation}
\end{definition}

Indeed, the frobenius norm has a very natural extension to tensors,
\begin{equation}
    \normf{\euA}^2 = \sum_{ijk}|\euA_{ijk}|^2.
\end{equation}

In \cite{mstar} it is shown that under the $\starM$ framework (and certain conditions on $M$), $\starM$-unitary tensors satisfy similar properties to unitary matrices. In particular, the frobenius norm is invariant under $\starM$-multiplication with orthogonal tensors. 

From the next result onwards, we restrict the transform matrix $M$ to scalar multiples of unitary matrices, 
\begin{equation}\label{eq:Morth}
    M = cW\ \text{for}\ c \in \C\backslash\{0\}\ \text{and unitary}\ W \in \C^{n_3\x n_3}
\end{equation}

\begin{lemma}\label{lem:unitarytensor}
    For $\starM$-unitary $\euQ \in \C^{n_1\x n_1\x n_3}$, $\euA \in \C^{n_1\x n_2 \x n_3}$, and \linebreak[4]$\euB \in \C^{n_2\x n_1 \x n_3}$,
    \begin{equation}
        \normf{\euQ \starM \euA} = \normf{\euA}\quad \text{and}\quad \normf{\euB \starM \euQ} = \normf{\euB},
    \end{equation}
\end{lemma}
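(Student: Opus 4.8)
The plan is to move everything into the transform domain, where the $\starM$-product degenerates into independent matrix products on the frontal slices and $\starM$-unitarity degenerates into ordinary unitarity of each of those slices; then the matrix-level fact that multiplication by a unitary matrix (on either side) preserves the Frobenius norm finishes the job slice by slice.

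First I would quantify how the transform $\euA\mapsto\hat\euA=\euA\x_3 M$ changes the Frobenius norm. The mode-3 unfolding only rearranges the entries of $\euA$, so $\normf{\euA}=\|\euA_{(3)}\|_F$, and since $\hat\euA_{(3)}=M\euA_{(3)}=cW\euA_{(3)}$ with $W$ unitary by \eqref{eq:Morth}, we get $\normf{\hat\euA}=|c|\,\normf{\euA}$ for every tensor; in particular the transform is $|c|$ times an isometry. I would also record the elementary bookkeeping identity $\normf{\euC}^2=|c|^{-2}\sum_{k=1}^{n_3}\|\hat\euC_{:,:,k}\|_F^2$, which reduces any Frobenius-norm computation to a sum over transformed frontal slices.

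Next I would unpack $\starM$-unitarity of $\euQ$ in the transform domain. Using the definitions of the tensor conjugate transpose and the identity tensor, the identities $\euQ^{\rm H}\starM\euQ=\euI$ and $\euQ\starM\euQ^{\rm H}=\euI$ translate, slicewise, into $(\hat\euQ_{:,:,k})^{\rm H}\hat\euQ_{:,:,k}=I$ and $\hat\euQ_{:,:,k}(\hat\euQ_{:,:,k})^{\rm H}=I$ for each $k\in[n_3]$; hence every transformed frontal slice $\hat\euQ_{:,:,k}$ is a unitary matrix.

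Finally I would assemble the pieces. By \cref{def:Mprod} the $k$-th transformed frontal slice of $\euQ\starM\euA$ is $\hat\euQ_{:,:,k}\hat\euA_{:,:,k}$, and unitary invariance of the matrix Frobenius norm gives $\|\hat\euQ_{:,:,k}\hat\euA_{:,:,k}\|_F=\|\hat\euA_{:,:,k}\|_F$ for each $k$; squaring, summing over $k$, multiplying by $|c|^{-2}$, and taking the square root yields $\normf{\euQ\starM\euA}=\normf{\euA}$. The claim for $\euB\starM\euQ$ is the mirror image, using that right multiplication by a unitary matrix also preserves the Frobenius norm. There is essentially no hard step here; the only point requiring care is tracking the scalar $c$, so that the $|c|$ factors inserted by the forward transform cancel against the $|c|^{-1}$ that converts back — and this cancellation is precisely why the restriction \eqref{eq:Morth} to $M=cW$ with $W$ unitary is needed, since a generic invertible $M$ would distort the slicewise norms and invalidate the argument.
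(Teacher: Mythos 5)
Your proof is correct, and it is essentially the standard argument: the paper itself does not spell out a proof but defers to \cite{mstar}, where the result is obtained in the same way you propose -- passing to the transform domain, observing that $\starM$-unitarity of $\euQ$ means each transformed frontal slice $\hat{\euQ}_{:,:,k}$ is unitary, and invoking slicewise unitary invariance of the matrix Frobenius norm. Your explicit bookkeeping of the scalar $c$ in $M=cW$ (so that the $|c|$ factors from the forward transform cancel when converting back) is handled correctly and is exactly the point that makes the restriction \eqref{eq:Morth} necessary.
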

The proof of \cref{lem:unitarytensor} can be found in \cite{mstar}.

We now construct a tensor inner product and inner product space, so that we can derive the gradients and step-sizes for our first tensor algorithm, TASD. 

\begin{definition}[Tensor Trace]
    For $\euA \in \C^{n_1\x n_1\x n_3}$, the tensor trace, $\tTr:\C^{n_1\x n_1 \x n_3}$, is defined 
    \begin{equation}
        \tTr(A) = \sum^{n_3}_{k=1} tr(\hat{\euA}_{:,:,k})
    \end{equation}
\end{definition}

\begin{definition}[Tensor Inner Product]\label{def:TIP}
    For $\euA,\ \euB \in \C^{n_1\x n_2\x n_3}$, we define the Tensor Inner Product (TIP) as 
    \begin{equation}
        \langle \cdot,\cdot\rangle: \C^{n_1\x n_2\x n_3} \x \C^{n_1\x n_2\x n_3} \rightarrow \C, \quad \langle \euA, \euB\rangle = tTr(\euA \starM \euB^{\rm H}).
    \end{equation}
\end{definition}
\begin{lemma}\label{lem:TIP}
    The TIP is an inner product, under which we define a tensor inner product space. Further, it can be shown that for unitary $M$,
    \begin{equation}
        \langle \euA, \euA \rangle = \normf{\euA}^2.
    \end{equation}
\end{lemma}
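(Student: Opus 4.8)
The plan is to push everything into the transform domain, where the $\starM$-product, the tensor conjugate transpose, and the tensor trace all act slicewise on ordinary matrices. Combining \cref{def:Mprod} with the definitions of the tensor conjugate transpose and of $\tTr$, one obtains
\[
  \langle \euA,\euB\rangle = \tTr(\euA\starM\euB^{\rm H})
  = \sum_{k=1}^{n_3} \mathrm{tr}\!\bigl(\hat{\euA}_{:,:,k}\,(\hat{\euB}_{:,:,k})^{\rm H}\bigr)
  = \sum_{k=1}^{n_3}\langle \hat{\euA}_{:,:,k},\,\hat{\euB}_{:,:,k}\rangle_F ,
\]
where $\langle X,Y\rangle_F=\mathrm{tr}(XY^{\rm H})$ is the usual Frobenius inner product on matrices; note that no scaling factor from $M$ appears, because $\tTr$ is defined directly in the transform domain. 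Hence the TIP is just the sum, over the $n_3$ frontal slices, of Frobenius inner products of the transformed tensors $\hat{\euA},\hat{\euB}$.

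From this identity the inner-product axioms are essentially inherited from the matrix case. Sesquilinearity is immediate, since $\euA\mapsto\hat{\euA}=\euA\x_3 M$ is linear and both $\mathrm{tr}(\cdot)$ and slicewise multiplication are linear. Conjugate symmetry follows from $\overline{\mathrm{tr}(C)}=\mathrm{tr}(C^{\rm H})$ applied to $C=\hat{\euA}_{:,:,k}(\hat{\euB}_{:,:,k})^{\rm H}$, which identifies the $k$th summand of $\overline{\langle\euA,\euB\rangle}$ with $\mathrm{tr}(\hat{\euB}_{:,:,k}(\hat{\euA}_{:,:,k})^{\rm H})$. For positive definiteness, $\langle\euA,\euA\rangle=\sum_{k}\|\hat{\euA}_{:,:,k}\|_F^2=\normf{\hat{\euA}}^2\ge 0$, and since $M$ is invertible the linear map $\euA\mapsto\hat{\euA}$ is a bijection, so $\langle\euA,\euA\rangle=0$ forces $\hat{\euA}=0$ and hence $\euA=0$. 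Equivalently: a finite sum of inner products is an inner product, and the pullback of an inner product along a linear bijection is again an inner product; this yields the claimed tensor inner product space.

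For the norm identity I would start from $\langle\euA,\euA\rangle=\normf{\hat{\euA}}^2$ just derived, and unfold: $\normf{\hat{\euA}}=\normf{M\euA_{(3)}}$ by the definition of $\x_3$, while $\normf{\euA_{(3)}}=\normf{\euA}$ because the mode-$3$ unfolding only rearranges entries. Finally, left multiplication by a unitary $M$ preserves the Frobenius norm of a matrix, $\|M\euA_{(3)}\|_F^2=\mathrm{tr}(\euA_{(3)}^{\rm H}M^{\rm H}M\euA_{(3)})=\|\euA_{(3)}\|_F^2$, so chaining the three equalities gives $\langle\euA,\euA\rangle=\normf{\euA}^2$. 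I do not anticipate a genuine obstacle here; the only two points needing care are that injectivity of $M$ is truly used for positive definiteness (a nonzero tensor $\euA$ with $\euA\x_3 M=0$ would otherwise have zero norm), and that the norm identity requires $M$ to be genuinely unitary rather than merely a nonzero scalar multiple of a unitary as permitted by \eqref{eq:Morth} — the scaling $M=cW$ would instead give $\langle\euA,\euA\rangle=|c|^2\normf{\euA}^2$.
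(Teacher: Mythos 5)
Your proposal is correct and follows essentially the same route as the paper's proof: reduce the TIP to a sum of slicewise Frobenius inner products in the transform domain, deduce the inner-product axioms from there, and obtain the norm identity via the mode-3 unfolding together with the unitary invariance of the Frobenius norm, noting the extra $|c|^2$ scaling when $M=cW$. The only difference is cosmetic — you spell out sesquilinearity, conjugate symmetry, and the use of invertibility of $M$ for definiteness, which the paper merely asserts as following from the trace properties.
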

In the frequent case that our transform matrix is a scalar product of a unitary matrix, then the inner product will need to be scaled appropriately for this identity to hold. The proof of \cref{lem:TIP} is given in the appendix.

Our final example of a generalised tensor function using the $\starM$ framework is the t-SVDM. The t-SVD was originally proposed in \cite{tprod} under the t-product framework, but was been generalised to the $\starM$ framework in \cite{mstar}, where a more detailed description and the pseudocode can be found. The t-SVDM naturally leads to the definition of (several) tensor ranks, which will be used extensively throughout our work on low rank tensor completion. 

\begin{theorem}[t-SVDM]\label{thm:t-rank}
    For $\euA \in \C^{n_1 \x n_2 \x n_3}$, the t-SVDM of $\euA$ is given by 
    \begin{equation}
        \euA = \euU \starM \euS \starM \euV^{\rm H},
    \end{equation}
where $\euU \in \C^{n_1 \x n_1 \x n_3}$ and $\euV\in \C^{n_2\x n_2\x n_3}$ are $\starM$-unitary tensors, and $\euS\in\C^{n_1 \x n_2\x n_3}$ is an f-diagonal tensor. It is computed by decomposing each frontal slice of $\hat{\euA}$ using the matrix SVD.
\end{theorem}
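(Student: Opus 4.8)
The plan is to prove existence by explicit construction, following the recipe already indicated in the statement: move to the transform domain, apply the ordinary matrix SVD slicewise, and transform back, then check that each required structural property survives the round trip.

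First I would form $\hat{\euA} = \euA \x_3 M$ and, for each $k\in[n_3]$, take a full matrix SVD of the $k$th frontal slice, $\hat{\euA}_{:,:,k} = U_k S_k V_k^{\rm H}$, with $U_k\in\C^{n_1\x n_1}$ and $V_k\in\C^{n_2\x n_2}$ unitary and $S_k\in\C^{n_1\x n_2}$ diagonal with nonnegative, nonincreasing diagonal entries. Stacking these as frontal slices defines tensors $\hat{\euU},\hat{\euS},\hat{\euV}$ in the transform domain. Since $M$ is invertible, the map $\euB \mapsto \euB \x_3 M$ is a bijection on $\C^{n_1\x n_2\x n_3}$ (with inverse $\,\cdot\, \x_3 M^{-1}$, as is immediate from the definition of the mode-3 unfolding and the reshape), so there are unique tensors $\euU=\hat{\euU}\x_3 M^{-1}$, $\euS=\hat{\euS}\x_3 M^{-1}$, $\euV=\hat{\euV}\x_3 M^{-1}$ whose transform-domain representations are exactly $\hat{\euU},\hat{\euS},\hat{\euV}$.

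Next I would verify the three claims. For the factorization: by \cref{def:Mprod} and the definition of the tensor conjugate transpose, $\euU\starM\euS\starM\euV^{\rm H}$ is the unique tensor whose $k$th transform-domain frontal slice is $\hat{\euU}_{:,:,k}\,\hat{\euS}_{:,:,k}\,(\hat{\euV}_{:,:,k})^{\rm H} = U_k S_k V_k^{\rm H} = \hat{\euA}_{:,:,k}$; as this holds for every $k$ and the transform is invertible, $\euU\starM\euS\starM\euV^{\rm H}=\euA$. For $\starM$-unitarity: the $k$th transform-domain slice of $\euU^{\rm H}\starM\euU$ equals $U_k^{\rm H}U_k=I=\hat{\euI}_{:,:,k}$ for all $k$, hence $\euU^{\rm H}\starM\euU=\euI$, and likewise $\euU\starM\euU^{\rm H}=\euI$; the same argument applies to $\euV$. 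For f-diagonality: each $\hat{\euS}_{:,:,k}=S_k$ is diagonal, so $\hat{\euS}$ is f-diagonal, and since the transform acts tubewise and $M$ is invertible a tube vanishes iff its transform does, so $\euS$ itself is f-diagonal.

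I do not anticipate a substantive obstacle: the entire $\starM$ calculus is defined slicewise in the transform domain, so the matrix SVD lifts almost verbatim, and the argument uses no more than the invertibility of $M$ (the restriction $M=cW$ is not needed for this theorem). The only points demanding care are bookkeeping ones --- confirming that $\x_3 M^{-1}$ genuinely inverts $\x_3 M$, and keeping track that $\euV^{\rm H}$ is defined by conjugate-transposing the \emph{transform-domain} slices of $\euV$ (not its spatial slices), so that no spurious conjugation is introduced when reassembling the slicewise SVD. Uniqueness is not asserted, and any canonical ordering of the ``singular tubes'' of $\euS$ is simply inherited from the ordering chosen in each $S_k$.
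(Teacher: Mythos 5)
Your construction is correct and is exactly the argument behind the theorem as the paper presents it: the paper defers the proof to \cite{mstar} but explicitly indicates the same recipe (slicewise matrix SVD of $\hat{\euA}$ in the transform domain, then invert the mode-3 transform), and your verification of the factorization, $\starM$-unitarity, and f-diagonality via the tubewise invertibility of $M$ is sound. Your side remark that only invertibility of $M$ is needed here (not $M=cW$) is also accurate.
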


\begin{definition}[Tensor ranks]\label{def:tensorranks}
Using the t-SVDM, there are several ranks that can be defined for tensors. For $\euA \in \C^{n_1\x n_2\x n_3}$:\\
The \textbf{t-rank} is the number of non-zero tube fibres in $\euS$.\\
The \textbf{multirank} is the vector $\rho \in \R^{n}$, such that $\rho_k = \rank(\hat{\euA}_{:,:,k})$. \\
The \textbf{implicit rank} is the sum of the multirank $r = \sum_k \rho_k$.
\end{definition}
When evaluating the performance of our tensor completion algorithms against ASD and LoopedASD, it is useful to be able to compare between matrix ranks and tensor t-ranks. The following result describes the relationship between these properties; the proof of which can also be found in \cite{mstar}. 
\begin{theorem}\label{thm:trankbound}
    Given tensor $\euA \in \C^{n_1\x n_2 \x n_3}$ and matrix $A \in \C^{n_1 \x n_2n_3}$ such that $\euA = \reshape (A,[n_1,n_2,n_3])$ (according to the method of reshaping set out in equation \ref{eq:reshape}). Then the t-rank, $t$, of $\euA$ is bounded by the rank, $r$, of $A$; i.e. $t\leq r$.
\end{theorem}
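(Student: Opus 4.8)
The plan is to reduce the statement to an elementary fact about column spaces of ordinary matrices. First I would pin down the reshaping convention of \cref{eq:reshape}: since flattening ``stick[s] each frontal slice to the right of the previous one'', the matrix $A$ is precisely the block row $A = [\,\euA_{:,:,1} \mid \euA_{:,:,2} \mid \cdots \mid \euA_{:,:,n_3}\,]$. In particular each frontal slice $\euA_{:,:,l}$ is a column submatrix of $A$, so $\mathrm{Col}(\euA_{:,:,l}) \subseteq \mathrm{Col}(A)$ and hence $\rank(\euA_{:,:,l}) \le \rank(A) = r$ for every $l \in [n_3]$.

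Next I would move to the transform domain. By the definition of $\euA \x_3 M$, applying $M$ along the tube fibres yields, for each $k \in [n_3]$, the identity $\hat{\euA}_{:,:,k} = \sum_{l=1}^{n_3} M_{k,l}\,\euA_{:,:,l}$; that is, every transformed frontal slice is a linear combination of the frontal slices of $\euA$. Consequently $\mathrm{Col}(\hat{\euA}_{:,:,k}) \subseteq \mathrm{Col}(A)$, so $\rho_k = \rank(\hat{\euA}_{:,:,k}) \le r$ for all $k$.

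It then remains to connect $\max_k \rho_k$ to the t-rank using \cref{thm:t-rank}. Writing $\euA = \euU \starM \euS \starM \euV^{\rm H}$, the f-diagonal tensor $\euS$ is obtained by placing, in each $\hat{\euS}_{:,:,k}$, the singular values of $\hat{\euA}_{:,:,k}$ on the diagonal; thus $\hat{\euS}_{:,:,k}$ has exactly $\rho_k$ nonzero diagonal entries. Since $M$ is invertible, a tube $\euS_{j,j,:}$ is nonzero if and only if $\hat{\euS}_{j,j,:}$ is, if and only if $\hat{\euS}_{j,j,k} \ne 0$ for some $k$, if and only if $j \le \max_k \rho_k$. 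Hence the number of nonzero tubes of $\euS$ — the t-rank $t$ — equals $\max_k \rho_k$, which by the previous paragraph is at most $r$.

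The argument is routine once the reshaping bookkeeping is fixed; the one point that needs a little care is the final identification of the t-rank with $\max_k \rho_k$ (rather than, say, with the implicit rank $\sum_k \rho_k$), together with the observation that the invertibility of $M$ is what allows us to count the nonzero tubes of $\euS$ equivalently in the original or the transform domain.
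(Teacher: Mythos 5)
Your proof is correct and follows essentially the same route as the source the paper relies on: the paper does not prove \cref{thm:trankbound} itself but cites \cite{mstar}, where the bound rests on the same two observations you make — each transformed frontal slice $\hat{\euA}_{:,:,k}$ is a linear combination of the frontal slices of $\euA$, which are column submatrices of the flattening $A$, so each $\rho_k\le\rank(A)$, and the t-rank is the maximum entry of the multirank. The only convention you use implicitly (and correctly) is that the t-SVDM sorts the singular values within each transformed slice, so the nonzero tubes of $\euS$ are exactly the first $\max_k\rho_k$ of them, and invertibility of $M$ lets you count them equivalently in either domain.
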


\section{Tensor Alternating Steepest Descent algorithms}
\label{sec:main}

In \cref{sec:ASD}, we introduced the matrix completion algorithms ASD and LoopedASD, and in \cite{undersampling} it was demonstrated that these algorithms are effective at recovering sparse hyper spectral data such as x-ray spectromicroscopy. In many such cases, the data sets are originally 3 dimensional and require reshaping to be compatible with the completion algorithms. We therefore seek to develop tensor completion algorithms that would allow the recovery of sparse 3D data without the need for flattening.

Similar to matrices, there are many approaches one can take to achieve the completion of sparse tensors. Continuing the precedent set by our work on matrix completion, we focus on alternating least squares approaches. Indeed, our first algorithm, TASD in Section~\ref{sec:TASD}, is a direct 3D analogue to ASD. Our second algorithm, TASDII in Section~\ref{sec:TASDII}, makes extensive use of LoopedASD within the $\starM$-product framework.

\subsection{TASD}
\label{sec:TASD}
Similar to ASD, we first restrict the iterates to the manifold of tensors with t-rank $t$ by imposing the following decomposition on $\euZ \in \C ^{n_1 \x n_2 \x n_3}$,
\begin{equation}
    \euZ = \euX \starM \euY, \quad \text{for}\ \euX \in \C^{n_1\x t \x n_3},\ \euY \in \C^{t\x n_2 \x n_3}.
\end{equation}
We wish to solve,
\begin{equation}\label{eq:objfunc}
    \min_{\euX, \euY} f(\euX,\euY),\quad \text{where}\quad f(\euX,\euY) = \frac{1}{2}||\euD - \sample{\euX \starM \euY}||^2_F
\end{equation}

We now follow the same optimisation steps as ASD, but for tensors under the $\starM$ product. TASD fixes one of the components of $f$ and implements one step of steepest descent with exact line search. The fixed component is alternated on each iteration. We write $f_{\euY}(\euX)$, $f_{\euX}(\euY)$ to denote the function $f$ when fixing $\euY$, $\euX$ respectively.

We compute the gradients as follows,
\begin{align}\label{eq:TASDgrad}
        \nabla f_{\euY} (\euX) &= -(\euD - \sample{\euX \starM \euY })\starM\euY^{\rm H}, \\
        \nabla f_{\euX} (\euY) &= -\euX^{\rm H} \starM (\euD - \sample{\euX \starM \euY }).
\end{align}
A detailed derivation is provided in \cref{sec:TASDderiv}. Notice that this is the same expression as in the matrix case, but replacing the matrix product by the $\starM$ product. 
Similarly we can compute the exact step sizes required to minimise the objective function in the gradient directions,
\begin{align}\label{eq:stepsizes}
        \eta_{\euX} &= \frac{\normf{\nabla f_{\euY} (\euX)}^2}{\normf{\sample{\nabla f_{\euY} (\euX) \starM \euY}}^2} &\eta_{\euY} = \frac{\normf{\nabla f_{\euX}(\euY)}^2}{\normf{\sample{\euX \starM \nabla f_{\euX}(\euY)}}^2}.
\end{align}

Using our expressions for the gradient and step sizes above, we construct the following TASD algorithm:
\begin{equation}
    \begin{cases}
        \text{Fixing }\ \euY_i,\ \text{ compute }\ \nabla f_{\euY_i}(\euX_i)\ \text{ and }\ \eta_{\euX_{i}}\\
        \text{Update }\ \euX_{i+1} = \euX_{i} - \eta_{\euX_{i}}\nabla f_{\euY_i}(\euX_i)\\
        \text{Fixing}\ \euX_{i+1},\ \text{ compute }\ \nabla f_{\euX_{i+1}}(\euY_i)\ \text{ and }\ \eta_{\euY_{i}}\\
        \text{Update }\euY_{i+1} = \euY_{i} - \eta_{\euY_{i}}\nabla f_{\euX_{i+1}}(\euY_i).
    \end{cases}\label{eq:TASDprocedure}
\end{equation}

One approach that helps optimise this implementation is the cheap residual update that can be performed each iteration. As for ASD, a similar update scheme for TASD can be used after updating each component, say $\euX_i$,
\begin{align}\label{eq:resupdate}
    \euD - \sample{\euX_{i+1} \starM \euY_i} &= \euD - \sample{(\euX_i - \eta_{\euX_i} \nabla f_{\euY_i}(\euX_i)) \starM \euY_i} \\
    &= \euD - \sample{\euX_i \starM \euY_i} + \eta_{\euX_i} \sample{\nabla f_{\euY_i}(\euX_i) \starM \euY_i}.
\end{align}
Note that the term $\sample{\nabla f_{\euY_i}(\euX_i) \starM \euY_i}$ is already computed for the denominator of the step size. 

Naturally, due to the non-convex nature of the objective function, it is impossible to prove convergence to the global minimum from arbitrary starting points. However, following a similar proof to that found in \cite{ASD}, it can be shown that any limit point of the iterates of TASD converge to a stationary point.
\begin{theorem}\label{thm:TASD}
    Let $\euX^*$, $\euY^*$ be limit points generated by TASD, then they satisfy,
    \begin{equation}
        \nabla f_{\euY^*}(\euX^*) = 0 \quad \text{and} \quad \nabla f_{\euX^*}(\euY^*) = 0.
    \end{equation}
\end{theorem}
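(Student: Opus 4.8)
The plan is to mirror the convergence argument for matrix ASD from \cite{ASD}, transplanted into the $\starM$-induced inner product space established by \cref{def:TIP} and \cref{lem:TIP}. The key structural fact I would exploit is that the exact-line-search update along a single factor is a step of steepest descent on a \emph{quadratic} function: with $\euY$ fixed, $f_{\euY}(\euX)$ is a quadratic in $\euX$ (since $\euX \starM \euY$ is $\starM$-linear in $\euX$ and $\sampleOP$ is linear), and likewise $f_{\euX}(\euY)$ is quadratic in $\euY$. For such quadratics the exact stepsize is precisely the one given in \eqref{eq:stepsizes}, and the classical descent identity gives the \emph{monotone decrease}
\begin{equation*}
    f(\euX_{i+1},\euY_i) = f(\euX_i,\euY_i) - \tfrac{1}{2}\,\eta_{\euX_i}\normf{\nabla f_{\euY_i}(\euX_i)}^2,
\end{equation*}
and an analogous identity for the $\euY$-update. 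First I would record these two identities, using \cref{lem:TIP} to identify $\normf{\cdot}^2$ with the TIP-norm so that the gradient formulas \eqref{eq:TASDgrad} are genuinely the Riesz-representation gradients in this space. Summing over $i$ and using $f \ge 0$ shows the series $\sum_i \eta_{\euX_i}\normf{\nabla f_{\euY_i}(\euX_i)}^2$ and $\sum_i \eta_{\euY_i}\normf{\nabla f_{\euX_{i+1}}(\euY_i)}^2$ both converge, hence their terms tend to zero.

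Next I would pass to a convergent subsequence $(\euX_{i_\ell},\euY_{i_\ell}) \to (\euX^*,\euY^*)$ (the stated "limit points"). The goal is to conclude $\nabla f_{\euY^*}(\euX^*) = 0$ and $\nabla f_{\euX^*}(\euY^*) = 0$. From $\eta_{\euX_i}\normf{\nabla f_{\euY_i}(\euX_i)}^2 \to 0$ there are two cases to rule out: either $\normf{\nabla f_{\euY_{i_\ell}}(\euX_{i_\ell})} \to 0$ (good, and by continuity of the gradient in $(\euX,\euY)$ — it is a fixed polynomial expression in the entries and $\sampleOP$, $\starM$ are bounded linear maps — this gives $\nabla f_{\euY^*}(\euX^*)=0$), or else the stepsizes $\eta_{\euX_{i_\ell}} \to 0$ along the subsequence. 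The latter case is where the real work is: I would bound $\eta_{\euX_i}$ from below away from zero \emph{whenever the corresponding gradient stays bounded away from zero}, using that the denominator $\normf{\sample{\nabla f_{\euY_i}(\euX_i)\starM \euY_i}}^2 \le \normf{\nabla f_{\euY_i}(\euX_i)\starM \euY_i}^2 \le \normf{\euY_i}^2\,\normf{\nabla f_{\euY_i}(\euX_i)}^2$ (the last inequality being the $\starM$-analogue of submultiplicativity, provable slice-wise in the transform domain using \cref{lem:unitarytensor} to pass the norm through $M$), so that $\eta_{\euX_i} \ge \normf{\euY_i}^{-2}$; one then needs $\normf{\euY_{i_\ell}}$ to stay bounded, which follows from convergence of the subsequence. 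The same argument handles the $\euY$-update with $\euX_{i+1}$ in place of $\euY_i$, together with the continuity estimate $\normf{\euX_{i_\ell+1}-\euX^*}\to 0$ obtained from the $\euX$-update step.

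There is one bookkeeping subtlety I would handle carefully: the two sums control $\nabla f_{\euY_i}(\euX_i)$ and $\nabla f_{\euX_{i+1}}(\euY_i)$ — note the index mismatch ($\euX_i$ versus $\euX_{i+1}$) — so to conclude both gradient conditions at the \emph{same} limit point I need $\euX_{i_\ell+1} \to \euX^*$ as well, i.e. the increments $\euX_{i+1}-\euX_i = -\eta_{\euX_i}\nabla f_{\euY_i}(\euX_i)$ tend to zero along the subsequence. This is immediate from $\eta_{\euX_i}\normf{\nabla f_{\euY_i}(\euX_i)}^2 \to 0$ combined with the boundedness of $\eta_{\euX_i}$ (bounded above since the denominator is bounded below on the relevant subsequence, or by noting $\normf{\euX_{i+1}-\euX_i}^2 = \eta_{\euX_i}^2\normf{\nabla f_{\euY_i}}^2 \le \eta_{\euX_i}\cdot(\eta_{\euX_i}\normf{\nabla f_{\euY_i}}^2)$ and that the middle factor sums, hence is bounded). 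Then continuity of $(\euX,\euY)\mapsto \nabla f_{\euY}(\euX)$ and of $(\euX,\euY)\mapsto\nabla f_{\euX}(\euY)$ finishes the argument.

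The main obstacle I anticipate is the case analysis in the second paragraph — specifically, establishing the lower bound on the stepsizes. The subtlety is that $\eta_{\euX_i}$ could a priori degenerate if $\euY_i$ blows up, so the argument genuinely relies on the boundedness of the subsequence of iterates we have selected; one must be careful that the bound $\eta_{\euX_i}\ge\normf{\euY_i}^{-2}$ is only useful once $\normf{\euY_{i_\ell}}$ is known to converge. Everything else — the descent identities, the telescoping sum, and the continuity of the gradient maps — is routine once the $\starM$-product submultiplicativity bound $\normf{\euA\starM\euB}\le\normf{\euA}\normf{\euB}$ (up to the scaling constant $c$ when $M=cW$) is recorded, which itself reduces to the matrix case slice-by-slice in the transform domain via \cref{lem:unitarytensor}.
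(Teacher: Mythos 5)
Your route is the one the paper itself takes: the paper's proof (deferred to the supplementary material) is an adaptation of the matrix ASD argument of \cite{ASD} to the $\starM$/TIP setting, and your main ingredients are correct — the exact-line-search descent identity on the quadratic restrictions, the telescoping bound giving $\sum_i \eta_{\euX_i}\normf{\nabla f_{\euY_i}(\euX_i)}^2<\infty$ (and its analogue for the $\euY$-updates), and the lower bound $\eta_{\euX_i}\ge \normf{\euY_i}^{-2}$ obtained from $\normf{\sample{\euA}}\le\normf{\euA}$ and slice-wise submultiplicativity in the transform domain (with the scaling constant when $M=cW$). This correctly yields $\nabla f_{\euY^*}(\euX^*)=0$ at any limit point of $(\euX_i,\euY_i)$.

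The genuine gap is in your treatment of the index mismatch, i.e. the step $\euX_{i_\ell+1}\to\euX^*$. That requires $\eta_{\euX_{i_\ell}}\normf{\nabla f_{\euY_{i_\ell}}(\euX_{i_\ell})}\to 0$, and neither justification you give for an upper bound on $\eta_{\euX_{i_\ell}}$ works. The denominator $\normf{\sample{\nabla f_{\euY_i}(\euX_i)\starM\euY_i}}^2$ is not bounded below along the subsequence: it is at most $\normf{\nabla f_{\euY_i}(\euX_i)}^2\normf{\euY_i}^2$, hence it tends to zero together with the gradient you have just shown vanishes. The alternative estimate $\normf{\euX_{i+1}-\euX_i}^2=\eta_{\euX_i}\cdot(\eta_{\euX_i}\normf{\nabla f_{\euY_i}(\euX_i)}^2)$ is circular: the second factor tending to zero helps only if $\eta_{\euX_i}$ is bounded, which is precisely what is unproven. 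Moreover there is no a priori upper bound on $\eta_{\euX_i}$; already in the matrix case with full sampling and $Y=\epsilon I$ one has $\nabla f_Y(X)=-\epsilon R$ and $\eta_X=\epsilon^{-2}$, so the step size scales like an inverse squared singular value of the frontal slices of $\hat{\euY}_i$ and can degenerate as these become ill-conditioned; convergence of $\euY_{i_\ell}$ alone does not exclude this (and even full rank of the limit does not immediately control the projected denominator $\normf{\sample{\nabla f_{\euY_i}(\euX_i)\starM\euY_i}}$). As written, your argument therefore proves $\nabla f_{\euY^*}(\euX^*)=0$ at limit points of $(\euX_i,\euY_i)$ and the second condition only at limit points of the interleaved sequence $(\euX_{i+1},\euY_i)$; to obtain both at the same point you need either an additional hypothesis (bounded step sizes, a safeguard on $\eta$, or a rank/conditioning assumption on the limit together with a genuine increment-vanishing argument) or a correspondingly weakened statement. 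Since the paper's supplementary proof claims to follow \cite{ASD}, you should check how that paper resolves exactly this point and import that resolution rather than the bound you propose.
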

The proof of Theorem \ref{thm:TASD} is given in supplementary material. 

Similar to ASD, the completion t-rank for TASD must be set before hand. A t-rank estimation procedure would have been possible in a potential LoopedTASD algorithm.
However, since it was found that TASD has longer run times than ASD (and more importantly than TASDII), such an algorithm is not benchmarked.

\subsection{TASDII}
\label{sec:TASDII}
TASD is a direct conversion of the ASD algorithm to tensors, using the $\starM$-product to facilitate tensor-tensor multiplication and induce the inner product space we can differentiate over. However, as we have seen many times in Section \ref{sec:background}, the core principal of the $\starM$-framework is to define tensor operations by applying the equivalent matrix operations on the frontal slices in the transform domain. Thus rather than completing tensors as a whole, we now aim to complete the frontal slices in transform domain.

It was shown in \cite{third_order_tensors_as_operators, mstar} that using a varied multirank (different rank per slice) will improve data compression compared to simply restricting the t-rank of the tensors. See the comparison of t-SVDM, which thresholds singular tubes in the product domain, and t-SVDMII, which thresholds all singular values across all the frontal slices in the transform domain. It can be proven that the approximation error of t-SVDMII is bounded by that of t-SVDM for the same implicit rank. 

TASD is comparable to t-SVDM, since we are completing the data in the product domain and the iterates are thresholded by the t-rank. We now propose a novel tensor completion algorithm, TASDII, which completes frontal slices independently in the transform domain and allows for a more flexible multirank, similar to t-SVDMII.

The TASDII algorithm starts from the crucial observation that we have aligned the tube fibres with the raster direction, so for each sample $(i,j,k) \in \Omega$ there exist samples at all other tube indices, $(i,j,k') \in \Omega$ for all $k'=[n_3]$.
Let 
\begin{equation}
    \overline{\Omega} = \{(i,j) \ | \ (i,j,k) \in \Omega\}
\end{equation}
be the sampling pattern for any frontal slice.
Then, we can transform the tensors along the tubes,
and break the tensor completion problem down into solving $n_3$ matrix completion problems for each of the $n_3$ frontal slices:
\begin{align}
        &\min_{\euX, \euY} \frac{1}{2}||\euD - \sample{\euX \starM \euY}||^2_F\\
        \Leftrightarrow &\min_{\hat{\euX}, \hat{\euY}} \frac{1}{2} \sum_{k=1}^{n_3}||\hat{\euD}_{:,:,k} - \mathcal{P}_{\overline{\Omega}}(\hat{\euX}_{:,:,k}\hat{\euY}_{:,:,k})||^2_F\\
        \Leftrightarrow &\min_{\hat X_k,\hat Y_k} \frac{1}{2}\normf{\hat{\euD}_{:,:,k} - \mathcal{P}_{\overline{\Omega}} (\hat X_k \hat Y_k)}^2,
        &  k = 1, ..., n_3.
\end{align}

Each of the $n_3$ frontal slices can be interpreted as an $(n_1 \x n_2)$ matrix with a robust Bernoulli sample pattern, and can be completed independently and in parallel. There is no need for slices to use the same completion rank, nor is it desirable. Depending on the transform matrix, some slices may contribute more towards the completed tensor, while others can be safely ignored. This allows for more time and resources to be put into the more significant slices, improving efficiency.

For example, when completing x-ray spectromicroscopy data using TASDII \linebreak[4]equipped with the t-product (so that $M$ is the DFT), the inner slices correspond to the high frequency components of the data. These high frequency components can be shown to be mostly noise, and do not complete successfully for low undersampling ratios in any case. It is quicker and will produce more accurate completions if these components are ignored and set to zero. A more detailed discussion on the structure of these documents is given in \cref{sec:specstruc}.

It is impractical to manually set the multirank for TASDII, i.e. the completion rank $r_k$ for each frontal slice. Instead, we use LoopedASD to complete each of the slices, using rank estimation. 
In a pseudocode fashion, we write
\begin{align}
    \hat{X}^*_k,\hat{Y}^*_k = \text{LoopedASD}(\hat{\euD}_{:,:,k}),\quad \hat{X}^*_k \in \C^{n_1\x r_k}, \hat{Y}^*_k \in \C^{r_k\x n_2}, \quad k=1,\ldots,n_3.
\end{align}
This will also ensure that TASDII is robust against the scaling and the conditioning (decay of the singular values) of the slices. We allow a rank of $r_k = 0$ when selecting the rank and when thresholding the singular values in the next step, which allows us to make zero slices where completions are not converging.

We now wish to restrict the number of significant components and remove spurious slices from the completion, without affecting the accuracy of the reconstruction. This should remove noisy components, effectively compressing the completion down to its most efficient multirank. This can be done by thresholding the energy of the completed singular values over \emph{all} slices. This thresholding approach is similar to that from t-SVDII algorithm described in \cite{mstar}.

Still in the transform domain, we apply the matrix SVD to each of the completed slices, then collate and sort (in decreasing order) the singular values of all slices into a vector $w$. We compute the cumulative sums of the sorted, squared singular values,
\begin{align}
    \hat{U}_k\hat{S}_k \hat{V}_k^{\rm H}  & = \text{svd}(\hat{X}^*_k\hat{Y}^*_k) \quad \text{then set}\quad w = \text{sort} \Bigg(\bigcup_{k = 1,...,n_3} \mathrm{diag}(\hat{S}_k) \Bigg),\\
    \sigma_j &= \sum^j_{k = 1} w^2_j, \quad \text{and} \quad \hat{\euS}_{:,:,k} = \hat{S}_k.
\end{align}
We then threshold the sums $\sigma_j$ with a relative tolerance $\gamma \in (0,1]$, finding
\begin{equation}
    J = \min j ~|~ \sigma_j > \gamma \cdot ||\hat{\euS}||_F^2.
\end{equation}
Now, all singular values below $w_J$
are set to zero, effectively reducing the rank over many of the slices. Again, note that slices with rank 0 are allowed. Formally,
\begin{equation}
    \hat{\euS}_{i,i,k}  = 0 \quad \text{if}\quad \hat{\euS}_{i,i,k} \le w_{J}.
\end{equation}
Once the multirank has been thresholded, we re-complete compressed slices with a reduced rank, $\rho_k^* = \text{rank}(\hat{\euS}_{:,:,k})$.  Slices with a positive reduced rank will be re-completed using LoopedASD, but now setting the completion rank to the reduced value. To reduce the run time, the previous completion of that slice will be projected down to the new rank manifold and used as a warm start.

From here, one can take full advantage of the flexibility of TASDII, by imposing sparsity patterns or other similar rules across the frontal slices. For example, in the case of x-ray spectromicroscopy it can be shown that non-zero slices entirely surrounded by zero slices are often overfit with a relative error greater than 1. To avoid such slices increasing the completion error, we simply set any slice surrounded by zero slices to zero as well.

Finally, stacking each completed frontal slice and applying the inverse transform will recover our completed tensor:
\begin{align}
    \hat{\euX}_{:,:,k}^* = \hat X_k^*,  \ \ \hat{\euY}_{:,:,k}^* = \hat Y_k^*, & \qquad \hat{\euZ}^*_{:,:,k} = \hat{\euX}_{:,:,k}^*\hat{\euY}_{:,:,k}^*, \quad k = 1, \ldots, n_3,\\
    \euZ^* & = \hat{\euZ}^* \x_3 M^{-1}.
\end{align}
The complete pseudocode for TASDII is provided in Algorithm~\ref{alg:TASDII}.

\begin{algorithm}
\caption{TASDII}
\label{alg:TASDII}
\hspace*{\algorithmicindent} \textbf{Input:} Data tensor $\euD$, truncation threshold $\gamma$, LoopedASD parameters \\
\hspace*{\algorithmicindent} \textbf{Output:} Completed tensor $\euZ^*$
\begin{algorithmic}[1]
\STATE{Set $\hat{\euD} \gets \text{fft}(\euD,[],3)$;}
\FOR{$k = 1:n_3$}
\STATE{$[\hat{\euX}_{:,:,k},\hat{\euY}_{:,:,k}] = \text{LoopedASD}(\hat{\euD}_{:,:,k})$ with automatic rank detection;} 
\STATE{$[\hat{\euU}_{:,:,k},\hat{\euS}_{:,:,k},\hat{\euV}_{:,:,k}] = \text{svd}(\hat{\euX}_{:,:,k}\hat{\euY}_{:,:,k})$;}
\STATE{$\rho_k = \text{rank}(\hat{\euS}_{:,:,k})$;}
\ENDFOR
\STATE{ }
\STATE{Store all f-diagonal entries, $w \gets \bigcup_k \text{diag}\big(\hat{\euS}_{:,:,k}\big)$;}
\STATE{$w \gets \text{sort}(w,\text{`descend'})$;}
\STATE{Compute cumulative sums, $\sigma_j = \textstyle \sum_{k=1}^jw^2_k$;}
\STATE{Set $J = \min j  \ |\ \sigma_j > \gamma ||\hat{\euS}||_F^2$;}
\STATE{Keep only the singular values greater than $w_J$: $\hat{\euS}(\hat{\euS} \le w_J) = 0$;}
\STATE{ }
\FOR{$k = 1:n_3$}
\STATE{$\rho^*_k = \text{rank}(\hat{\euS}_{:,:,k})$;}
\IF{$\rho^*_k = 0$}
\STATE{$\hat{\euX}_{:,:,k} = 0; \quad \hat{\euY}_{:,:,k} = 0$;}
\ENDIF
\IF{$\rho^*_k > 0 \And \rho^*_k < \rho_k$}
\STATE{$[\hat{\euX}_{:,:,k},\hat{\euY}_{:,:,k}] =\text{LoopedASD}(\hat{\euD}_{:,:,k})$ up to the rank $\rho^*_k$;}
\ENDIF
\IF{$\rho^*_k = \rho_k$}
\STATE{$\hat{\euX}_{:,:,k} = \hat{\euU}_{:,:,k}\hat{\euS}^{\frac{1}{2}}_{:,:,k}; \qquad \hat{\euY}_{:,:,k} = \hat{\euS}^{\frac{1}{2}}_{:,:,k} \hat{\euV}^{\rm H}_{:,:,k}$;}
\ENDIF
\ENDFOR
\STATE{ }
\STATE{Apply any additional sparsity pattern across the frontal slices, e.g.}
\FOR{$k = 1:n_3$}
\IF{$\rho^*_{k-1} = 0\ \AND\ \rho^*_{k+1} = 0$}
\STATE{$\hat{\euX}_{:,:,k} = 0; \quad \hat{\euY}_{:,:,k} = 0$;} 
\ENDIF
\STATE{$\hat{\euZ}_{:,:,k} \gets \hat{\euX}_{:,:,k}\hat{\euY}_{:,:,k}$;}
\ENDFOR
\STATE{ }
\STATE{$\euZ^* \gets \text{ifft}(\hat{\euZ},[],3)$\;}
\end{algorithmic}
\end{algorithm}

Since TASDII is essentially a collection of independent LoopedASD completions, 
its convergence properties (which are in turn inherited from ASD) carry over, in particular, it can be shown that TASDII converges to a stationary point.

\section{Numerical Experiments}
\label{sec:experiments}

\subsection{Data}
We consider data sets from the X-ray spectromicroscopy XANES (X-ray Absorption Near Edge Structure) experiments. 
We use 5 such data sets, scanned from 3 different specimen at different resolutions. 
In each case, the specimen was created by making a drop cast mix of iron molecules (Fe$_2$O$_3$ and Fe$_3$O$_4$). 
When taking the measurements for these data sets, a raster sampling pattern was used to reduce data acquisition times. This ensures they are compatible with our algorithms when we align the raster direction with the tube fibres of the tensor to ensure full transforms. 
Dimensions and specimen for each dataset can be found in Table~\ref{tab:spectromicroscopy_datasets}. 

\begin{table}[h!]
    \centering
    \begin{tabular}{c|c|c|c|c|c|c}
        Name & \# Energy Levels & \# Rows & \# Columns & Specimen No.\\
        \hline
        DS1 &  149 & 101 & 101 & 1 \\
        DS2 &  150 & 92  & 79  & 1 \\
        DS3 &  152 & 55  & 54  & 2 \\
        DS4 &  152 & 40  & 40  & 3 \\
        DS5 &  152 & 80  & 80  & 3 \\ 
    \end{tabular}
    \caption{Dimensions of spectromicroscopy datasets}
    \label{tab:spectromicroscopy_datasets}
\end{table}

\subsection{Implementation}
The key metric by which we judge performance is the relative square error, RSE, given in dB, and defined,
\begin{equation}\label{eq:RSE}
    RSE = 20\log_{10}\frac{\normf{\euA - \euX^* \starM \euY^*}}{\normf{\euA}}.
\end{equation}
The following experiments were conducted using Matlab, and run on the University of Bath HPC \cite{bath_computing}. The HB120rs v3 partitions were used, which feature 120 AMD EPYC 7V73X CPU cores, 448 GB of RAM and clock frequencies up to 3.5 GHz.

For testing the performance of TASD and TASDII, we restricted ourselves to the t-product framework (with $M$ being the Discrete Fourier Transform), rather than a general $\starM$-product framework. 

When implementing TASD, we use the same stopping conditions as in ASD (see \cref{sec:ASD}). These are (1) $i_\text{stop}$, the maximum number iterations; (2) a tolerance $\epsilon_{con}$ on the relative residual norm, 
\begin{equation}\label{eq:resi}
    res_i = \frac{\normf{\euD - \sample{\euX \starM \euY}}}{\normf{\euD}};
\end{equation}
(3) a tolerance $\epsilon_{ES}$ on the difference in relative residual norm over 50 iterations.
This residual is cheap to compute due to the residual update described in \cref{eq:res update}. Tolerances are typically set to $10^{-4}$ when exact completion is expected, or at the estimated noise level for real data.

The residual update is an effective method to reduce the per-iteration cost of TASD. Unfortunately, other efficiency-improving steps are difficult to implement in MATLAB due to the current lack of support for sparse ND arrays and Einstein notation computations. This slows TASD from its potential maximum efficiency that can be achieved in other implementations. 

On the other hand, when implementing TASDII, we allow LoopedASD to run automatically on each slice. Naturally, the final implementation of ASD after the rank has been estimated, is subject to the same stopping conditions as above.

When completing the x-ray spectromicroscopy data, it should be noted that following the thresholding of singular vales and re-completion of compressed slices, we \emph{do} make use of the additional steps described in \cref{sec:TASDII}, i.e. set slices to zero when they are surrounded by other zero slices. The justification for this approach is described in~\cref{sec:specstruc}, where we discuss the spectral structure of these data sets. 
In short, when using the Fourier transform the outer frontal slices correspond to the more meaningful low-frequency information, and the middle slices correspond to the high-frequency noise. It can be seen that more focus should be put into completing the outer slice, because they contain more significant information and are more amenable to low rank completion. Thresholding SV energy with $\gamma$ will set most noisy slices to 0.
However, in some cases slices are overfit by the completion. To avoid these slices impacting on the quality of completion, we implement the following rule: slices surrounded by 0-slices are also set to 0.

\subsection{Results}\label{sec:Real Data}
Different methods have different parameters to tune:
$(r,p)$ for LoopedASD, $(t,p)$ for TASD and $(\gamma,p)$ for TASDII.
Reliable comparison of those methods requires us to select $r,t$ and $\gamma$
to `optimal' values for each given undersampling ratio $p$. 
In \cref{fig:Real Completion Results}, we vary each parameter independently of the other,
and plot the average RSE~\eqref{eq:RSE} over 20 runs (random realisations of the initial guess).

\begin{figure}[h!]
    \begin{subfigure}{0.32\textwidth}
        \centering
        \includegraphics[width = \textwidth]{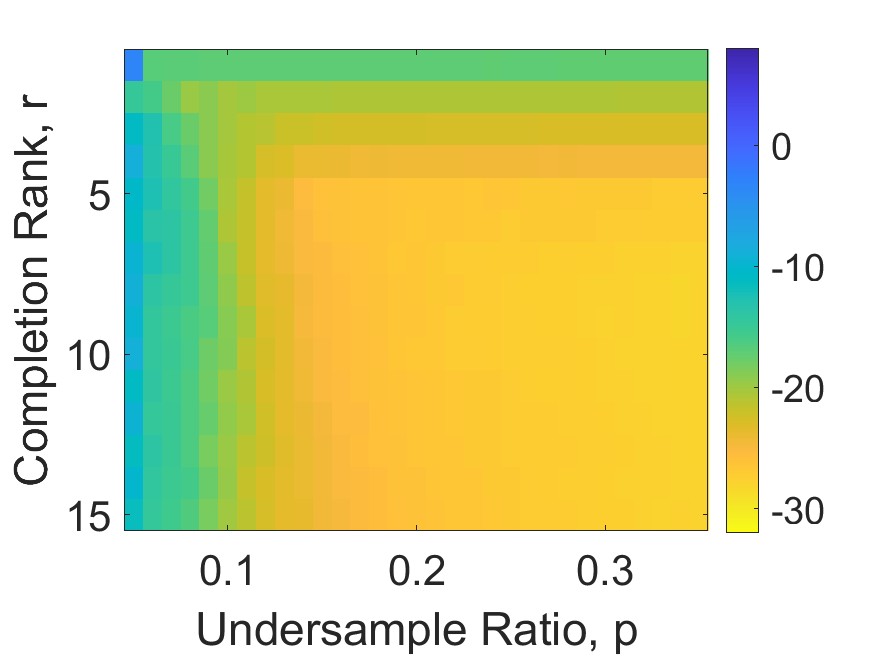}
        \caption{LoopedASD - DS1}
        \label{fig:LoopedASD-DS1}
    \end{subfigure}
    \begin{subfigure}{0.32\textwidth}
        \centering
        \includegraphics[width = \textwidth]{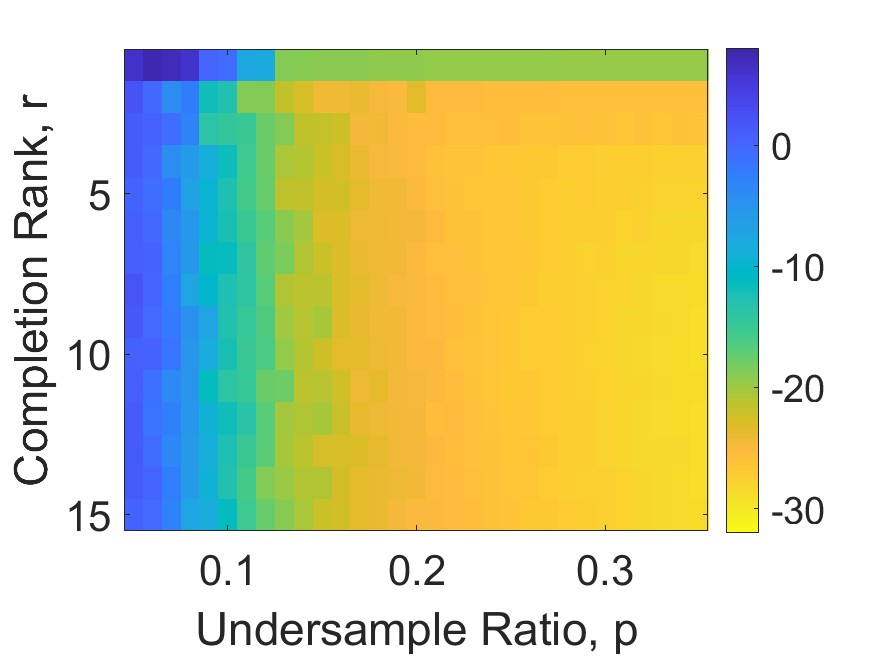}
        \caption{LoopedASD - DS3}
        \label{fig:LoopedASD-DS3}
    \end{subfigure}
    \begin{subfigure}{0.32\textwidth}
        \centering
        \includegraphics[width = \textwidth]{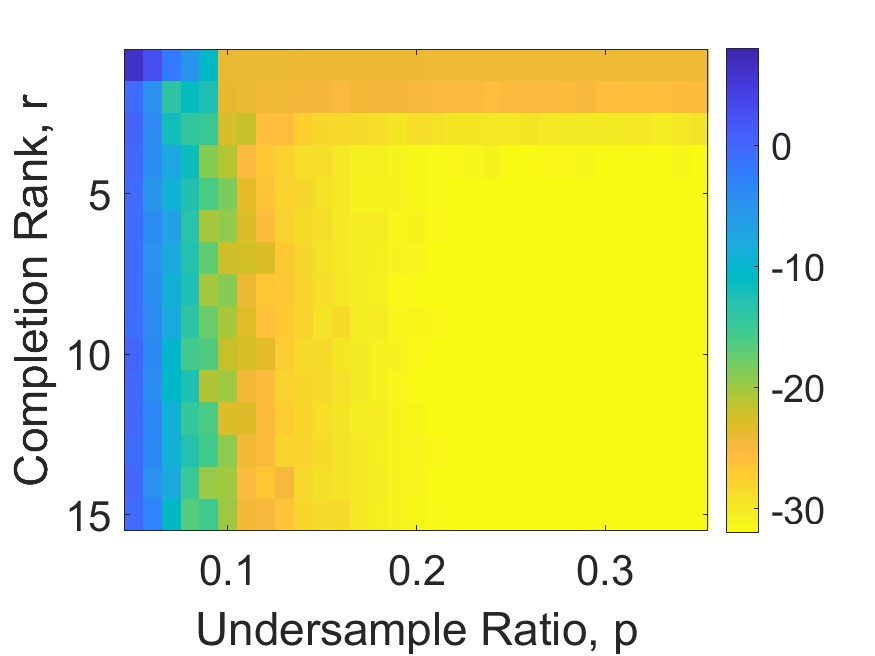}
        \caption{LoopedASD - DS5}
        \label{fig:LoopedASD-DS5}
    \end{subfigure}

    \begin{subfigure}{0.32\textwidth}
        \centering
        \includegraphics[width=\textwidth]{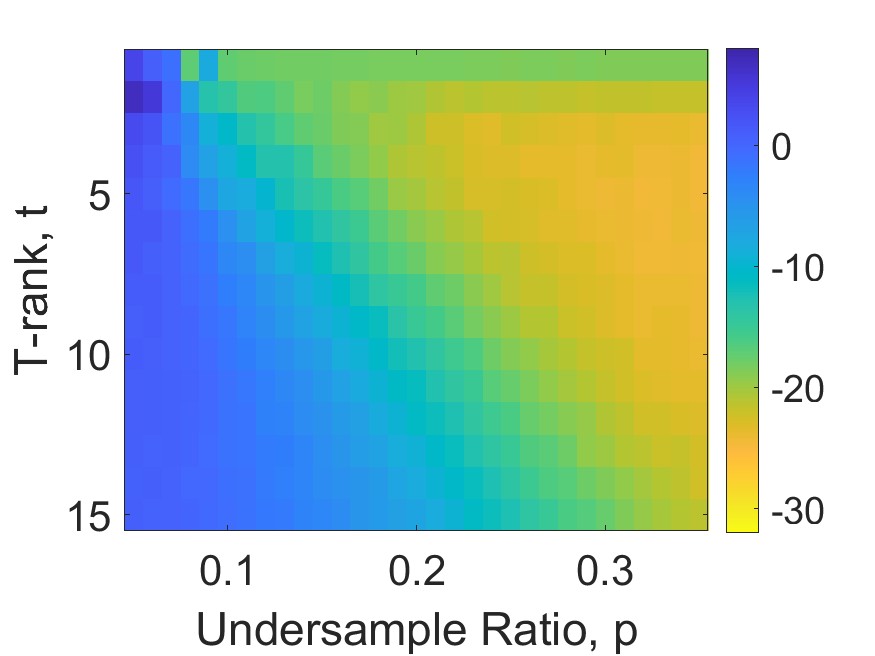}
        \label{fig:TASD-DS1}        
        \caption{TASD - DS1}
    \end{subfigure}
    \begin{subfigure}{0.32\textwidth}
        \centering
        \includegraphics[width=\textwidth]{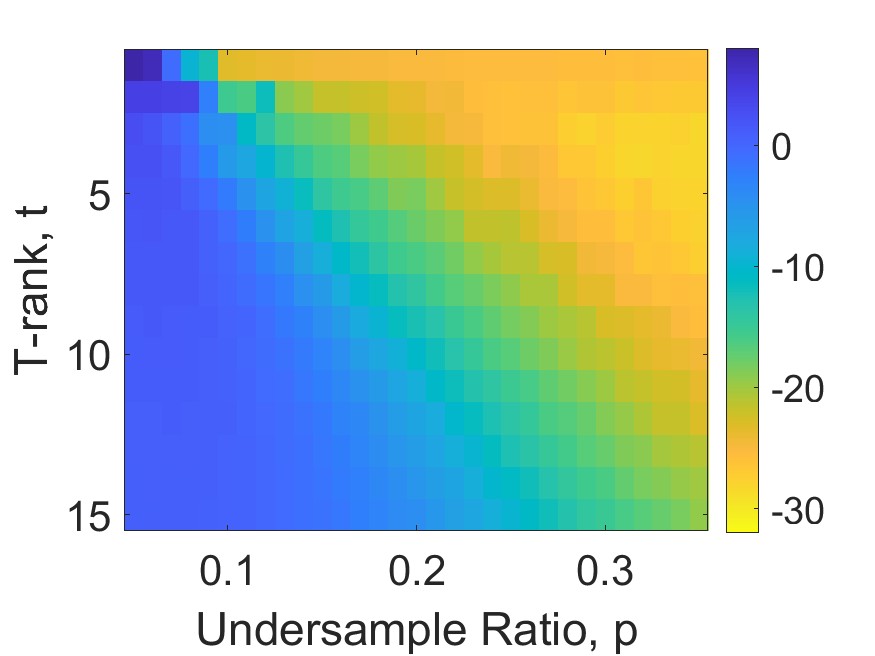}
        \label{fig:TASD-DS3}
        \caption{TASD - DS3}
    \end{subfigure}
    \begin{subfigure}{0.32\textwidth}
        \centering
        \includegraphics[width=\textwidth]{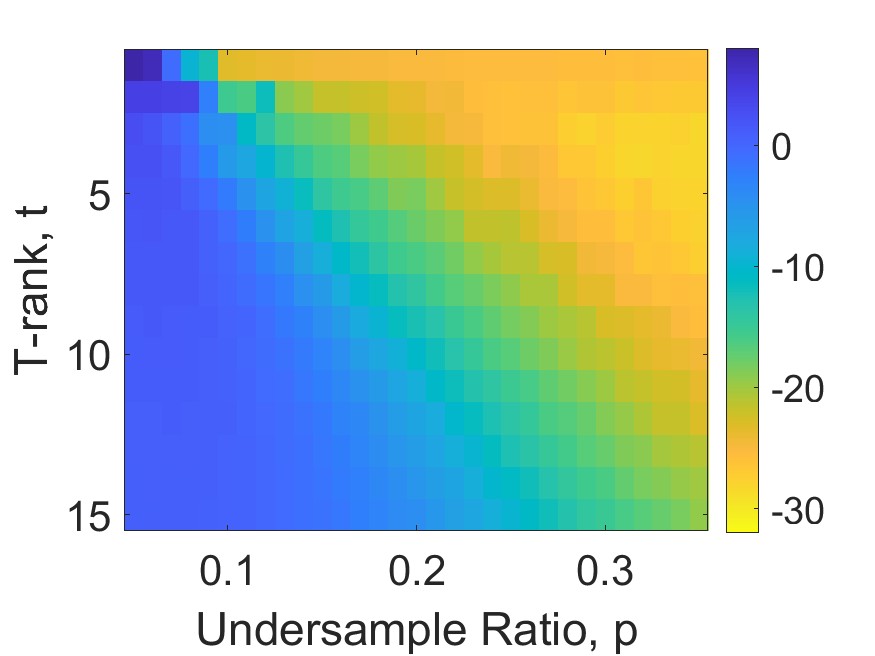}
        \label{fig:TASD-DS5}
        \caption{TASD - DS5}
    \end{subfigure}

    \begin{subfigure}{0.32\textwidth}
        \centering
        \includegraphics[width = \textwidth]{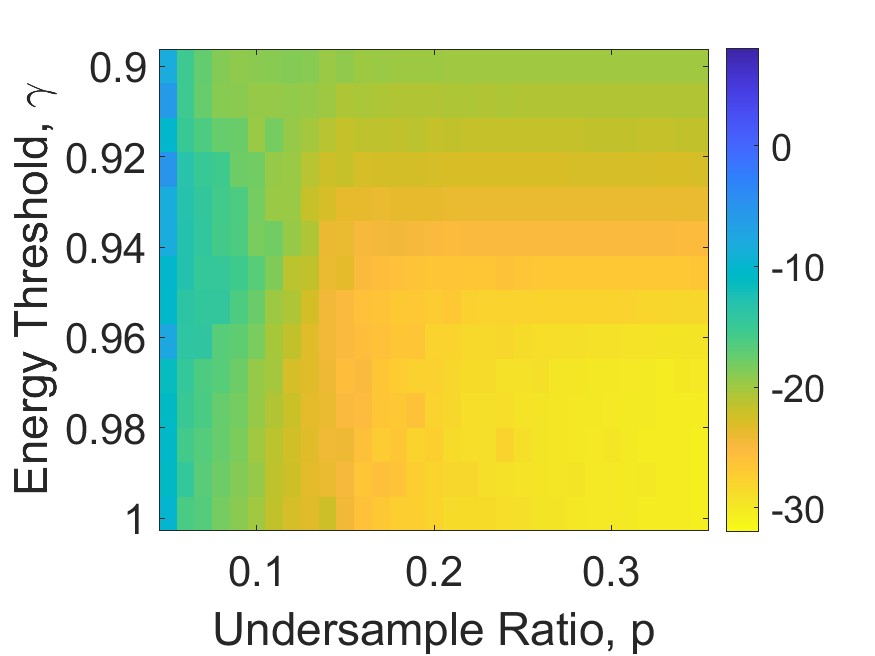}
        \caption{TASDII - DS1}
        \label{fig:TASDII-DS1}
    \end{subfigure}
    \begin{subfigure}{0.32\textwidth}
        \centering
        \includegraphics[width = \textwidth]{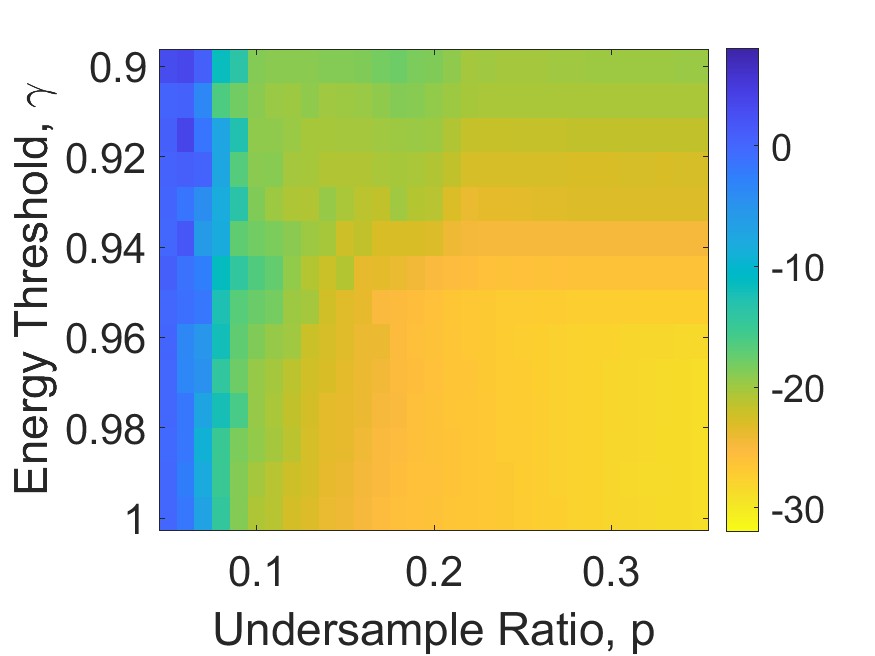}
        \caption{TASDII - DS3}
        \label{fig:TASDII-DS3}
    \end{subfigure}
    \begin{subfigure}{0.32\textwidth}
        \centering
        \includegraphics[width = \textwidth]{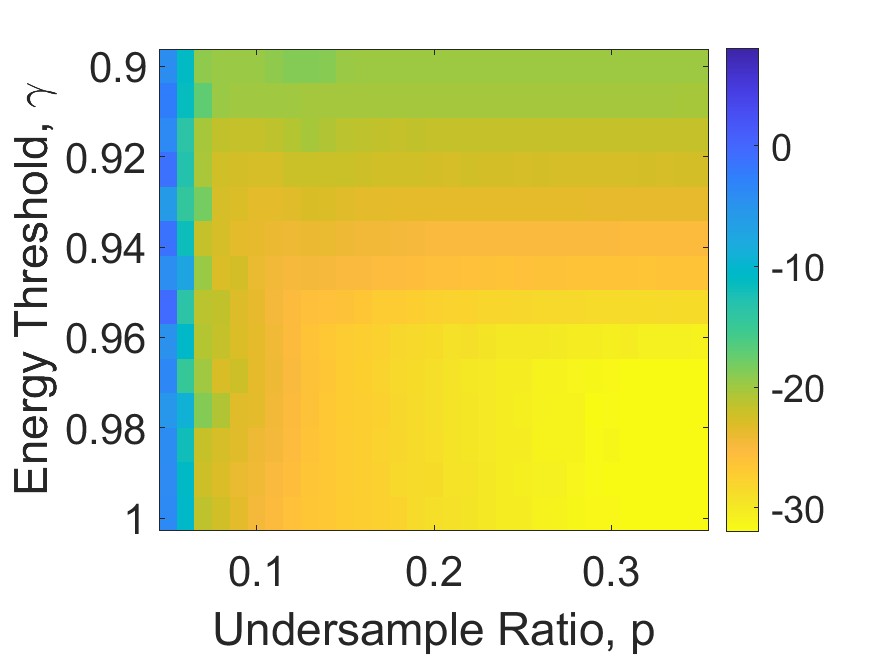}
        \caption{TASDII - DS5}
        \label{fig:TASDII-DS5}
    \end{subfigure}

    \caption{RSE for X-Ray Spectromicroscopy data with different tuning parameters. The top row illustrates the results for LoopedASD, the middle row for TASD, and the bottom row illustrates the results for TASDII. Yellow is better (lower error).}
    \label{fig:Real Completion Results}
\end{figure}

Now for each undersampling ratio $p$, we select the the minimal completion error across the other parameter range to provide our `optimal parameters' $r^*(p),t^*(p)$ and $\gamma^*(p)$ for LoopedASD, TASD and TASDII respectively. Plotting the minimal error against $p$ for each method allows comparison between the different completion algorithms. This can be seen in~\cref{fig:MethodCompare}.

\begin{figure}[h!]
    \begin{subfigure}{0.32\textwidth}
        \centering
        \includegraphics[width=\textwidth]{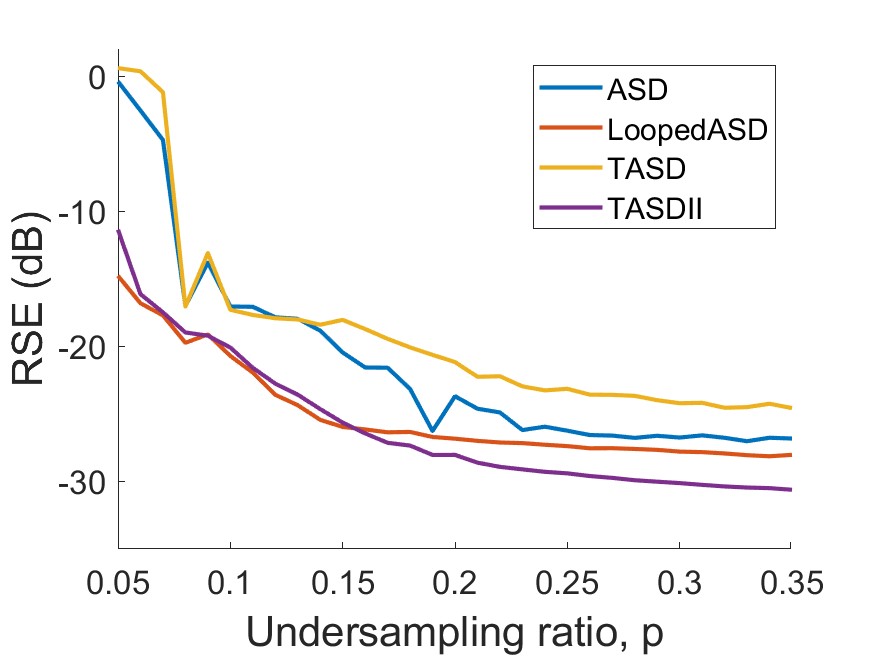}
        \caption{DS1}
        \label{fig:MethodCompareDS1}
    \end{subfigure}
    \begin{subfigure}{0.32\textwidth}
        \centering
        \includegraphics[width=\textwidth]{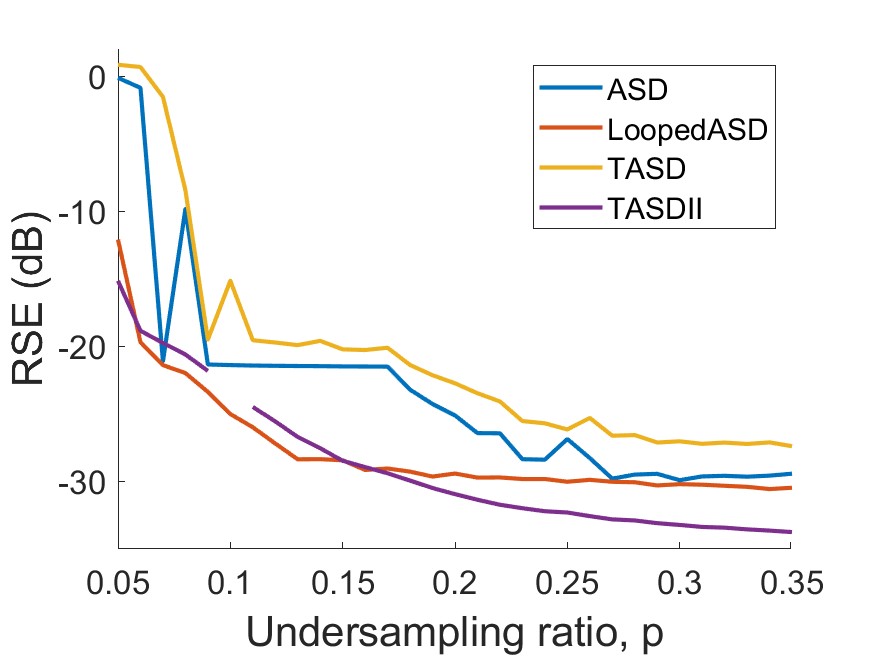}
        \caption{DS2}
        \label{fig:MethodCompareDS2}
    \end{subfigure}
    \begin{subfigure}{0.32\textwidth}
        \centering
        \includegraphics[width=\textwidth]{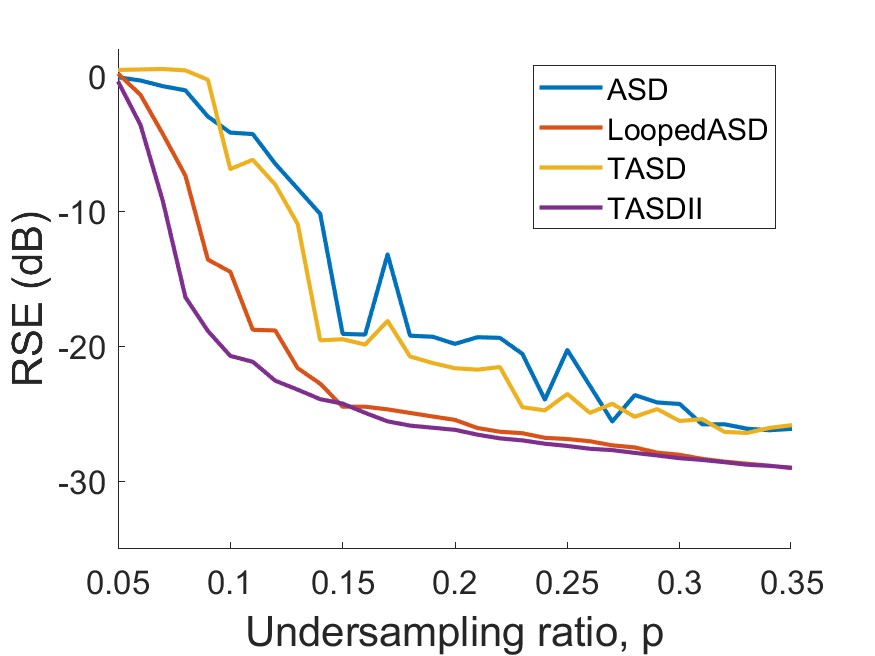}
        \caption{DS3}
        \label{fig:MethodCompareDS3}
    \end{subfigure}

    \begin{subfigure}{0.32\textwidth}
        \centering
        \includegraphics[width=\textwidth]{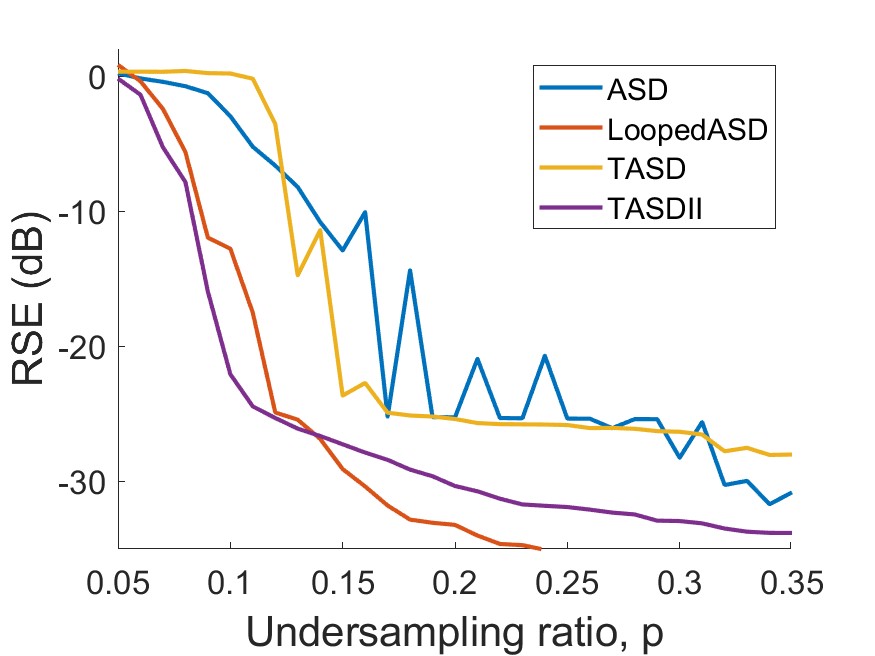}
        \caption{DS4}
        \label{fig:MethodCompareDS4}
    \end{subfigure}
    \begin{subfigure}{0.32\textwidth}
        \centering
        \includegraphics[width=\textwidth]{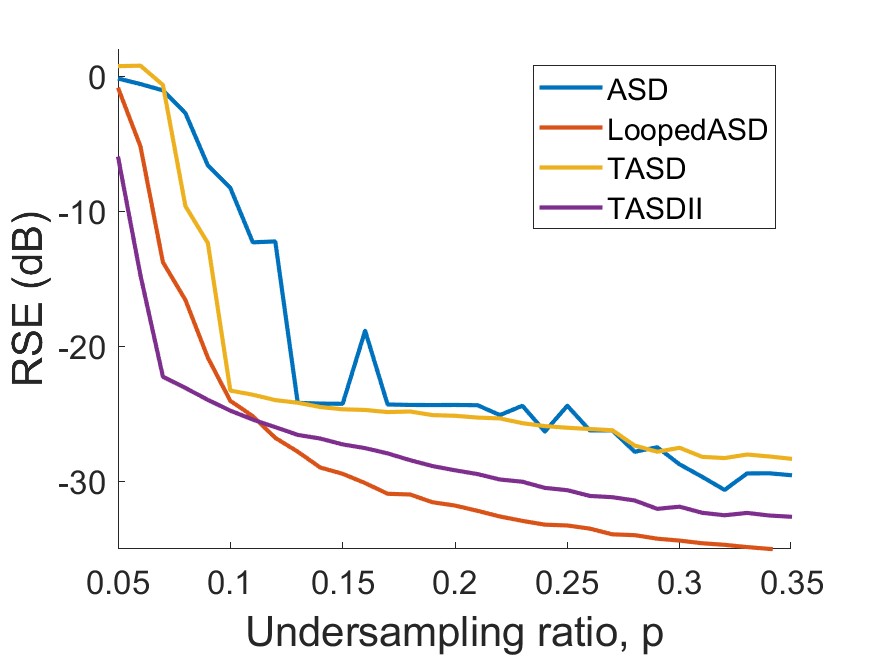}
        \caption{DS5}
        \label{fig:MethodCompareDS5}
    \end{subfigure}
    \begin{subfigure}{0.32\textwidth}
        
    \end{subfigure}
    \caption{Minimal Completion Errors of X-ray Spectromicroscopy data across the parameter space for each undersampling ratio and for each algorithm.}
    \label{fig:MethodCompare}
\end{figure}

It is clear that TASDII performs significantly better than TASD, and is on par with LoopedASD. In \cref{fig:Real Completion Results}, we can see that for lower undersampling ratios (around $p = 0.1$) TASDII is still able to complete the data accurately, with completion errors around -20dB; the results for TASD are much weaker (about -8dB) for the same undersampling ratios. This is equally clear in \cref{fig:MethodCompare}, where we see that TASD is more comparable to ASD, while TASDII produces lower completion errors across all undersampling ratios.

On the other hand, we observe that TASDII is more accurate than LoopedASD for lower undersampling ratios for DS3, DS4 and DS5, and for higher undersampling ratios for DS1 and DS2.

\section{Conclusions and outlook}
\label{sec:conclusions}
We have proposed two new tensor completion algorithms, named TASD and TASDII. We particularly advocate for TASDII, which breaks the tensor completion problem down to many independent matrix completion problems. Indeed, the TASDII performance balances two effects of the shape and structure. On one hand, the shape of each frontal slice of the transformed tensor can be ensured to be more square, with each slice sampled using the Bernoulli distribution. This has been shown to improve how easily a matrix can be completed (i.e. what is the smallest undersampling ratio required). When using an appropriate transform $M$, fewer singular components are required across all slices to recover an accurate reconstruction of the data set. Additionally, the ability to completely parallelise the algorithm means that TASDII is the fastest of the methods examined here, particularly for larger data sets.

On the other hand, each slice of a tensor is smaller than the matrix flattening of the whole tensor. This increases the required undersampling ratio, since a smaller dataset of the same rank requires more known entries to complete. In practice, this balance is problem-dependent.
We can expect the tensor completion to perform better for larger square tensors,
where each slice is both square and sufficiently large, whereas the flattened tensor is significantly rectangular that may degrade the matrix completion accuracy.

There are a number of extension we can envision for the work presented in this paper. (1) Since the Discrete Fourier Transform as matrix $M$ for the $\starM$-product was chosen in this paper for demonstration only, it can be replaced by a more efficient application-oriented transform, for example, the Discrete Cosine Transform.
The latter would also make all tensors in spectromicroscopy real-valued. 
Ultimately, following \cite{optimalM}, $M$ can be learned together with $\euX,\euY$. 
All theoretical results exploited and proved in this paper hold as far as $M$ is a multiple of a unitary matrix. (2) The $\starM$-product decomposition can be extended to higher-dimensional tensors,
for example, adding the 4th dimension of the angle of the specimen rotation.
This may also involve more structured sampling patterns, such as rastering over two latter dimensions. (3) Further regularizations are possible within the tensor completion problem, such as penalising the norm of finite differences or total variation of $\euX$ and $\euY$ to ensure the smoothness of the completion.
Both TASD and TASDII should remain the same (most importantly in terms of the rank adaptation loop) up to technical modifications in the gradient.

\appendix

\section{Algorithmic details about LoopedASD}\label{sect: loopedASD}
The main steps of the LoopedASD algorithm are outlined in \cref{algo: loopedASD}, where we can clearly see that the backbone of LoopedASD is a rank incremental loop. 
\begin{algorithm}
        \caption{LoopedASD for matrix completion with automatic rank estimation}\label{algo: loopedASD}
            \hspace*{\algorithmicindent} \textbf{Input:} samples $D\in\C^{n_1\times N}$, partitioned indeces $\Omega=\bigcup_{\ell=1}^k\Omega_{\ell}$, $r_{\max}$\\
\hspace*{\algorithmicindent} \textbf{Output:}  $X^*$, $Y^*$
\begin{algorithmic}[1]
            \STATE Take $X^\ast=[\;]$, $Y^\ast=[\;]$
            \FOR {$j = 1,\dots,r_{\max}$}
            \STATE Choose $i\in [k]$ uniformly at random and take $\Omega_{\text{train}}=\Omega\setminus\Omega_i$, $\Omega_{\text{test}}=\Omega_i$ 
            \STATE Take $X_0=[X^\ast, x]\in\C^{n_1\times j}$, with $x\in\C^{n_1}$ as in \cref{loopedASDrescale}
            \STATE Take $Y_0=[(Y^\ast)^{\rm H},y]^{\rm H}\in\C^{j\times N}$, with $y\in\C^{N}$ as in \cref{loopedASDrescale}
            \STATE run ASD \cref{eq:ASD} on $\Omega_{\text{train}}$, with $r=j$ and inputs $X_0,\,Y_0$, to get $X^*,\,Y^*$
            \STATE compute the test error $t_j:=\|\mathcal{P}_{\Omega_{\text{test}}}(D)-\mathcal{P}_{\Omega_{\text{test}}}(X^\ast Y^\ast)\|_F$
            \ENDFOR
            \STATE Determine the completion rank $r$ from $t_j$, $j=1,\dots,r_{\max}$.
            \STATE Compute $[U,S,V]=\text{\texttt{svd}}(X^\ast Y^\ast)$
            \STATE Take $X_0=U(:,1:r)$, $Y_0=S(1:r,1:r)V(:,1:r)^{\rm H}$
            \STATE run ASD \cref{eq:ASD} on $\Omega$, with inputs $X_0,\,Y_0$, to get $X^*$ and $Y^*$
        \end{algorithmic}
\end{algorithm}

The main purpose of the loop is to estimate the ($\epsilon$-)rank of the data through the cross validation approach detailed in lines 3, 6 and 7 (i.e., intermediate ASD runs are on training data, while the residual (or test error) is evaluated on the test data). Indeed, if the test error at the $j$th loop is close to the best rank $j$ approximation error of the full matrix for all $j=1,\dots,r_{\max}$, and if the singular values of the latter quickly decay, the Eckart-Young-Mirsky theorem \cite{E-Y-M_Theorem} implies that the behavior of both the test errors and the singular values are similar. Thus any rank detection strategy available in the literature can be applied directly to the test errors at step 9 of \cref{algo: loopedASD}. Here, also following the standard spectromicroscopy practice \cite{LEROTIC200435}, we use KNEEDLE \cite{kneedle} to detect the point of maximal curvature in the sequence of singular values; alternatively one can threshold the numerical gradients of the singular values. 

Once the completion rank has been identified, the factors from the last looped completion are projected down to this rank and used as initial guesses for one last ASD run to output the complete data (lines 10 to 12 of \cref{algo: loopedASD}). Within the increasing rank loop, the initial guesses for the $j$th ASD call are obtained by augmenting the factors completed at the $(j-1)$th ASD call (empty matrices if $j=1$) with rescaled Gaussian random vectors $g_x$ and $g_y$, such that, in lines 4 and 5 of \cref{algo: loopedASD},
\begin{equation}\label{loopedASDrescale}
x = 2^{-j}\sqrt{p}\|D\|_F^{-1}\|g_x\|_F^{-1}g_x,\quad
y = 2^{-j}\sqrt{p}\|D\|_F^{-1}\|g_y\|_F^{-1}g_y,   
\end{equation}
where $p$ is the undersampling ratio. 

It is experimentally observed that the looping structure of \cref{algo: loopedASD} significantly improves the probability of completion of sparse data even with a given fixed rank $r_{\text{fix}}$, especially when the condition number of the data matrix is large. In this setting, one applies LoopedASD by performing only $r_{\text{fix}}$ intermediate ASD calls (lines 2 to 8 of \cref{algo: loopedASD}), proceeding then immediately to the last full completion (lines 10 to 12 of \cref{algo: loopedASD}). Heuristically, the looping strategy helps avoiding convergence of ASD to spurious local minima by setting favourable initial guesses for the completion at each rank, since completing a lower rank subspace of higher rank data can be typically done accurately. 

\section{Further results for $\starM$ framework} 

Recall the definition of the tensor t-trace for $\euA \in \C^{n_1\x n_1\x n_3}$,
    \begin{equation}
        \tTr(A) = \sum^{n_3}_{k=1} tr(\hat{\euA}_{:,:,k}),
    \end{equation}
where \textit{tr} is the matrix trace. Using the definition of the $\starM$ product, it is simple to show the t-trace satisfies the following properties,
\begin{align}
    \tTr(\euA + \euB) &= \tTr(\euA) + \tTr(\euB)\\
    \tTr(\lambda\euA) &= \lambda \tTr(\euA)\\
    \tTr(\euA^T) &= \tTr(\euA)\\
    \tTr(\euA \starM \euB) &= \tTr(\euB \starM \euA).
\end{align}
In \cref{lem:TIP}, we stated that the TIP is an inner product, and that, for unitary $M$,
    \begin{equation}
        \langle \euA, \euA \rangle = \normf{\euA}^2.
    \end{equation}
\begin{proof}
    The necessary properties of an inner product follow from the definition of the TIP and the properties of the tensor transpose and the tensor trace. Thus for suitable tensors $\euA,\ \euB,\ \euC$ and scalars $\lambda$ and $\mu$, we have
    \begin{align}
        \overline{\langle \euA,\euB\rangle} =  &=\langle \euB,\euA \rangle \\
        \langle \lambda \euA + \mu \euB,\euC\rangle &= \lambda \langle\euA,\euC\rangle + \mu \langle \euB,\euC\rangle\\
        \langle \euA,\euA\rangle &\geq 0 \\
        \langle \euA,\euA\rangle &= 0  \quad \Leftrightarrow \euA = 0
    \end{align}

    Indeed, the last two lines are equivalent to proving that $\langle \euA,\euA \rangle = \normf{\euA}^2$ for unitary $M$. We can write the inner product as,
    \begin{align*}
        \langle \euA,\euA \rangle &= \tTr(\euA \starM \euA^{\rm H})
        = \sum_{k=1}^{n_3} \text{tr}(\hat{A}_{:,:,k} \hat{A}^{\rm H}_{:,:,k}) = \sum_{k=1}^{n_3} \sum_{ij}(\hat{A}_{i,j,k} \hat{A}^{\rm H}_{j,i,k}) \\
        &= \sum_{k=1}^{n_3} \sum_{ij}(\hat{A}_{i,j,k} \overline{\hat{A}}_{i,j,k}) \\
        &= \sum_{ijk} |\hat{A}_{i,j,k}|^2 = \normf{\hat{\euA}}
    \end{align*}
    If we assume $M$ is unitary, then we get
    \begin{equation}
        \normf{\hat{\euA}} = \normf{\euA \x_3 M} = \normf{M\euA_{(3)}} = \normf{\euA_{(3)}} = \normf{\euA}
    \end{equation}    
    In the case that $M$ is a scalar multiple of a unitary matrix, then one would need to appropriately scale the inner product to ensure this identity holds.
\end{proof}

\section{TASD}\label{sec:TASDderiv}
We now derive the gradients and step sizes given in equations \cref{eq:TASDgrad} and \cref{eq:stepsizes} for the TASD. To compute these values, we make use of the properties of the tensor inner product found in \cref{lem:TIP} and the following result. The proofs of these lemma are provided in the supplemental document.
\begin{lemma}
   Let $\mathcal{P}$ be an orthogonal projection of tensors, so that \linebreak[4]$\mathcal{P}:\C^{n_1\x n_2\x n_3} \rightarrow \C^{n_1\x n_2\x n_3}$. Then,
   \begin{align}\label{IP_projection}
       \langle \mathcal{P}(\euA), \euB \rangle = \langle \mathcal{P}(\euA), \mathcal{P}(\euB) \rangle = \langle \euA, \mathcal{P}(\euB) \rangle
   \end{align}
\end{lemma}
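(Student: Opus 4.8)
The statement to prove is that for an orthogonal projection $\mathcal{P}$ of tensors, $\langle \mathcal{P}(\euA), \euB \rangle = \langle \mathcal{P}(\euA), \mathcal{P}(\euB) \rangle = \langle \euA, \mathcal{P}(\euB) \rangle$. The plan is to reduce everything to the slice-wise, transform-domain picture, where $\mathcal{P}_\Omega$ acts as the ordinary matrix/vector orthogonal projection onto the coordinate subspace supported on $\Omega$, and then invoke the two defining properties of an orthogonal projection: idempotence ($\mathcal{P}^2 = \mathcal{P}$) and self-adjointness with respect to the TIP.

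First I would recall from \cref{def:TIP} and \cref{lem:TIP} that the TIP can be written entrywise (up to a fixed scalar when $M$ is a scalar multiple of a unitary, which does not affect any of the identities) as $\langle \euA, \euB \rangle = \sum_{ijk} \euA_{ijk}\overline{\euB_{ijk}}$, i.e. the standard Euclidean inner product on $\C^{n_1 n_2 n_3}$; this follows by the same computation used in the proof of \cref{lem:TIP}. Since $\mathcal{P}_\Omega$ simply zeros out the entries outside $\Omega$ and fixes those inside, it is manifestly idempotent, $\mathcal{P}_\Omega^2 = \mathcal{P}_\Omega$, and self-adjoint, $\langle \mathcal{P}_\Omega(\euA), \euB\rangle = \sum_{(i,j,k)\in\Omega} \euA_{ijk}\overline{\euB_{ijk}} = \langle \euA, \mathcal{P}_\Omega(\euB)\rangle$. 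With these two facts the chain of equalities is immediate: using self-adjointness, $\langle \mathcal{P}(\euA), \euB\rangle = \langle \euA, \mathcal{P}(\euB)\rangle$; and applying self-adjointness followed by idempotence, $\langle \mathcal{P}(\euA), \euB\rangle = \langle \euA, \mathcal{P}(\euB)\rangle = \langle \euA, \mathcal{P}^2(\euB)\rangle = \langle \mathcal{P}(\euA), \mathcal{P}(\euB)\rangle$, which closes the loop. Alternatively, one can phrase the whole argument abstractly: in any inner product space an orthogonal projection is by definition idempotent and self-adjoint, and the displayed identities are the standard consequences — the content here is merely that $(\C^{n_1\x n_2\x n_3}, \langle\cdot,\cdot\rangle)$ is an inner product space (\cref{lem:TIP}) and that the tensor sampling operator $\mathcal{P}_\Omega$ is genuinely an orthogonal projection in it.

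I do not anticipate a real obstacle; the only thing to be careful about is the scalar-multiple-of-unitary case for $M$, where $\langle\euA,\euA\rangle$ equals $\normf{\euA}^2$ only after rescaling the TIP — but since the same fixed scalar multiplies both sides of every identity, none of the three equalities is affected, so I would just note this in passing. The main "work," such as it is, is simply to spell out that $\mathcal{P}$ being called an orthogonal projection means precisely idempotence plus TIP-self-adjointness (for the sampling operator $\mathcal{P}_\Omega$ this is transparent from the entrywise formula), after which the result is a two-line manipulation.
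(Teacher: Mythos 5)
Your proposal is correct and follows essentially the same route as the paper's (supplementary) proof: identify the TIP with the (scaled) entrywise Frobenius inner product when $M$ is a scalar multiple of a unitary matrix, observe that the sampling operator is then idempotent and self-adjoint with respect to the TIP, and obtain the three equalities by the standard two-line manipulation. Your handling of the scalar-multiple case, noting that the common factor does not affect any of the identities, matches the paper's restriction \cref{eq:Morth} and leaves no gap.
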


To ensure the sparse projections are more straightforward to understand in the following computations, we can express the sampling operator using the indicator tensor $\mathds{1}_{\Omega} \in \{0,1\}^{n_1 \x n_2 \x n_3}$, defined
\begin{equation}
    [\mathds{1}_{\Omega}]_{ijk} = 
    \begin{cases}
        1 & \text{if} \quad (i,j,k) \in \Omega,\\
        0 & \text{otherwise}.
    \end{cases}
\end{equation}
The projection itself is then computed by taking the Hadamard (pointwise) product with, i.e. $\sample{A} = \mathds{1}_{\Omega} \circ A$
\begin{lemma}
    The gradients of the objective function (see \cref{eq:objfunc}) with one fixed parameter, $f_{\euY} (\euX)$ and $f_{\euX} (\euY)$, are given by, 
    \begin{equation}
        \nabla f_{\euY} (\euX) = -(\euD - \sample{\euX \starM \euY })\starM\euY^{\rm H}, \qquad \nabla f_{\euX} (\euY) = -\euX^{\rm H} \starM (\euD - \sample{\euX \starM \euY }).
    \end{equation}
\end{lemma}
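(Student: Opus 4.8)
The plan is to compute the (Fréchet) gradient of $f_{\euY}(\euX) = \tfrac12\|\euD - \sample{\euX\starM\euY}\|_F^2$ with respect to $\euX$ directly from the definition, using the tensor inner product space machinery of \cref{lem:TIP}. First I would rewrite the objective in terms of the TIP: since $\langle\euA,\euA\rangle = \|\euA\|_F^2$ (after suitable scaling when $M = cW$), we have $f_{\euY}(\euX) = \tfrac12\langle \euR, \euR\rangle$ where $\euR = \euD - \sample{\euX\starM\euY}$ is the sampled residual. The gradient is identified via the first-order expansion $f_{\euY}(\euX + \euH) = f_{\euY}(\euX) + \mathrm{Re}\,\langle \nabla f_{\euY}(\euX), \euH\rangle + o(\|\euH\|)$, so the task reduces to extracting the linear-in-$\euH$ part of $\tfrac12\langle \euR(\euX+\euH), \euR(\euX+\euH)\rangle$.

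The key steps, in order: (1) Expand $\euR(\euX+\euH) = \euR(\euX) - \sample{\euH\starM\euY}$, using linearity of the $\starM$-product and of $\sample{\cdot}$. (2) Substitute into the quadratic form and collect the cross term: the linear part is $-\mathrm{Re}\,\langle \euR(\euX), \sample{\euH\starM\euY}\rangle$ (the $\|\sample{\euH\starM\euY}\|^2$ term is quadratic and discarded). (3) Use the projection identity \eqref{IP_projection}, $\langle \euR, \sample{\euH\starM\euY}\rangle = \langle \sample{\euR}, \euH\starM\euY\rangle$; here $\sample{\euR} = \euR$ since $\euR$ already lies in the range of $\mathcal{P}_\Omega$. (4) Move $\euY$ across the inner product using the adjoint property of the $\starM$-product with respect to the TIP, namely $\langle \euA\starM\euB, \euC\rangle = \langle \euA, \euC\starM\euB^{\rm H}\rangle$, which follows from $\tTr(\euA\starM\euB) = \tTr(\euB\starM\euA)$ and the definition of the tensor conjugate transpose. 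This yields $\langle \euR, \sample{\euH\starM\euY}\rangle = \langle \euR\starM\euY^{\rm H}, \euH\rangle$, so comparing with the defining expansion gives $\nabla f_{\euY}(\euX) = -(\euD - \sample{\euX\starM\euY})\starM\euY^{\rm H}$. (5) The computation for $\nabla f_{\euX}(\euY)$ is entirely symmetric: expand in $\euX\starM(\euY+\euH)$, and move $\euX^{\rm H}$ to the other side using $\langle \euA\starM\euB, \euC\rangle = \langle \euB, \euA^{\rm H}\starM\euC\rangle$.

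The main obstacle is establishing the adjoint/cyclic properties of the $\starM$-product under the TIP cleanly — specifically that $\langle \euA\starM\euB, \euC\rangle = \langle \euA, \euC\starM\euB^{\rm H}\rangle = \langle \euB, \euA^{\rm H}\starM\euC\rangle$. This is not hard but requires care: it reduces, via the definition of the TIP and the tensor transpose in the transform domain, to the corresponding matrix identities $\mathrm{tr}(AB C^{\rm H}) = \mathrm{tr}(A(CB^{\rm H})^{\rm H})$ applied slicewise to $\hat{\euA}_{:,:,k}, \hat{\euB}_{:,:,k}, \hat{\euC}_{:,:,k}$, using that $(\widehat{\euB^{\rm H}})_{:,:,k} = (\hat{\euB}_{:,:,k})^{\rm H}$ and that the $\starM$-product acts slicewise in the transform domain. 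A secondary subtlety is handling the scalar-multiple-of-unitary case $M = cW$: the TIP must be rescaled by $|c|^{-2}$ for $\langle\euA,\euA\rangle = \|\euA\|_F^2$ to hold, but since the gradient is defined by a ratio-free identity and this scaling is uniform, it does not affect the final formula — I would remark on this rather than track the constant. One should also note that we take the real part of the inner product (or equivalently treat $\C^{n_1\x n_2\x n_3}$ as a real inner product space of doubled dimension) so that the notion of gradient is well-defined for the real-valued function $f$; the formula obtained is the standard Wirtinger-type gradient and matches the matrix case in \eqref{eq:ASD steps1}.
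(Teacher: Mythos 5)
Your proposal is correct, but it proves the lemma by a genuinely different route than the paper. The paper's proof is an explicit entrywise computation: it writes $(\euX \starM \euY)_{\alpha\beta\gamma}$ in index form through the entries of $M$ and $M^{-1}$, differentiates with Kronecker deltas to get $\partial(\euX\starM\euY)_{\alpha\beta\gamma}/\partial\euX_{ijk}$, and then assembles $[\nabla f_{\euY}(\euX)]_{ijk}$ by summing against the residual and re-recognising the result as $-(\euR\starM\euY^{\rm H})_{ijk}$ using unitarity of $W$. You instead work coordinate-free: expand the first variation $\euR(\euX+\euH)=\euR(\euX)-\sample{\euH\starM\euY}$, isolate the term linear in $\euH$, and identify the gradient through $\mathrm{Re}\,\langle\nabla f_{\euY}(\euX),\euH\rangle$ using the projection identity \eqref{IP_projection} together with the adjoint relations $\langle\euA\starM\euB,\euC\rangle=\langle\euA,\euC\starM\euB^{\rm H}\rangle=\langle\euB,\euA^{\rm H}\starM\euC\rangle$, which indeed reduce slicewise in the transform domain to $\mathrm{tr}(\hat{\euA}_k\hat{\euB}_k\hat{\euC}_k^{\rm H})$ and are easy to verify. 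This is essentially the machinery the paper reserves for the step-size derivation, extended to the gradient itself; what it buys is a shorter, index-free argument that visibly parallels the matrix case \eqref{eq:ASD steps1} and makes the $M=cW$ scaling issue transparent (the adjoint identity is homogeneous, so the constant cancels, as you note), at the price of having to state and prove the adjoint lemma and to verify that $\sampleOP$ is an orthogonal projection with respect to the TIP (true because, for $M$ a scalar multiple of a unitary matrix, the TIP is a positive multiple of the entrywise inner product). The paper's index computation avoids these auxiliary lemmas but is notationally heavier; both yield the same formula, and your treatment of the real-part (Wirtinger-type) gradient convention is consistent with the paper's.
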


\begin{proof}
    We derive the gradient for $f_{\euY} (\euX)$ only, the derivation for $f_{\euX} (\euY)$ is similar. \cref{def:Mprod} outlines the $\starM$-product of tensors $\euX \in \C^{n_1\x n_2\x n_3}$ and $\euY \in \C^{n_2\x n_4\x n_3}$, which can be expressed in terms of the entries of $\euX$, $\euY$, and $M$ as
    \begin{align}\label{eq:t-prod index}
        \hat{\euX}_{:,:,\gamma} = \sum_{s=1}^{n_3} M_{\gamma s}\euX_{:,:,s}& \qquad \text{and} \qquad \hat{\euY}_{:,:,\gamma} = \sum_{s=1}^{n_3} M_{\gamma s}\euY_{:,:,s}.\\
        (\euX \starM \euY)_{\alpha,\beta,\gamma} &= \sum_{\gamma'=1}^{n_3} M_{\gamma\gamma'}^{-1}\; \sum_{l=1}^{n_2} \hat{\euX}_{\alpha,l,\gamma'}\; \hat{\euY}_{l,\beta,\gamma'},\\
        &= \sum_{\gamma' l} M^{-1}_{\gamma \gamma'}\; \hat{\euX}_{\alpha,l,\gamma'}\; \hat{\euY}_{l,\beta,\gamma'},
    \end{align}
    where we will now omit the limits of the sums for clarity. We now compute the non zero partial derivatives of the $\starM$-product with respect to the elements of $\euX$. Recall that we have assumed $M = cW$ for unitary matrix $W$ and non-zero $c$, so that $M^{-1} = \frac{1}{c}W^{\rm H}$. For clarity, we have omitted the scalar constant $c$ from the derivation.
    \begin{equation}
        \begin{aligned}\label{eq:tprodx}
            \pdiff{(\euX \starM \euY)_{\alpha \beta \gamma}}{\euX_{ijk}} &= \sum_{\gamma' l} M^{\rm H}_{\gamma \gamma'}\; \pdiff{\hat{\euX}_{\alpha,l,\gamma'}}{\euX_{ijk}}\;\hat{\euY}_{l,\beta,\gamma'}\\
            &= \sum_{\gamma' l} M^{\rm H}_{\gamma \gamma'}\; \hat{\euY}_{l,\beta,\gamma'} \sum_{s}M_{\gamma' s}\pdiff{\euX_{\alpha,l,s}}{\euX_{ijk}}\\
            &= \sum_{\gamma' l} M^{\rm H}_{\gamma \gamma'}\; \hat{\euY}_{l,\beta,\gamma'} \sum_{s}M_{\gamma' s}\delta_{i\alpha} \delta_{jl} \delta_{ks}\\
            &= \delta_{i\alpha} \sum_{\gamma'} M^{\rm H}_{\gamma \gamma'}\; \hat{\euY}_{j,\beta,\gamma'} M_{\gamma' k}\\
        \end{aligned}
    \end{equation}
    where we have used the kronecker delta in the $3^{rd}$ line. Similarly, by differentiating with respect to $\euY_{ijk}$ gives
    \begin{align}
        \pdiff{(\euX \starM \euY)_{\alpha \beta \gamma}}{\euY_{ijk}}\ =\  \delta_{j\beta} \sum_{\gamma'} M^{\rm H}_{\gamma\gamma'}\; M_{\gamma' k}\; \hat{\euX}_{\alpha i \gamma'}
    \end{align}
    
    Finally, we compute the gradient, $\nabla f_{\euY}(\euX)$, by computing the $(ijk)-th$ partial derivatives. To simplify the expressions, we use the substitution for the residual $\euR = \euD - \sample{\euX\starM\euY}$.
    \begin{equation}
        \begin{aligned}
            \left[\nabla f_{\euY}(\euX)\right]_{ijk} &= \pdiff{ }{\euX_{ijk}} \frac{1}{2}\normf{\euD - \sample{\euX\starM\euY}}^2\\ 
            &= \sum_{\alpha \beta \gamma} \euR_{\alpha \beta \gamma} \overline{ \pdiff{\left(\euD - \mathds{1}_{\Omega} \circ (\euX \starM \euY)\right)_{\alpha \beta \gamma}}{\euX_{ijk}}}\\
            &= \sum_{\alpha \beta \gamma}\euR_{\alpha \beta \gamma}\cdot -(\mathds{1}_{\Omega})_{\alpha \beta \gamma}\cdot \overline{\pdiff{\left(\euX \starM \euY)\right)_{\alpha \beta \gamma}}{\euX_{ijk}}}\\
            &= -\sum_{\alpha \beta \gamma}\euR_{\alpha \beta \gamma}\ \cdot\  \delta_{i\alpha} \overline{\sum_{\gamma'} M^{\rm H}_{\gamma \gamma'} M_{\gamma' k} \hat{\euY}_{j\beta \gamma'}}\\
            &= -\sum_{\beta \gamma \gamma'} \overline{M}^{\rm H}_{\gamma \gamma'}\; \overline{M}_{\gamma' k}\; \euR_{i \beta \gamma}\; \overline{\hat{\euY}}_{j \beta \gamma'}\\
            &= -\sum_{\beta \gamma \gamma'} M^{\rm H}_{k \gamma'}\; M_{\gamma' \gamma}\; \euR_{i \beta \gamma}\; \hat{\euY}^{\rm H}_{\beta j \gamma'} \\
            &=-\sum_{\beta \gamma'} M^{\rm H}_{k \gamma'}\; \hat{\euR}_{i \beta \gamma'}\; \hat{\euY}^{\rm H}_{\beta j \gamma'} \\
            &=-\sum_{\gamma' \beta} M^{-1}_{k \gamma'}\;  \hat{\euR}_{i\beta \gamma'} \overline{\hat{\euY}}_{j \beta \gamma'} \\
            &=-(\euR \starM \euY^{\rm H})_{ijk}\quad =\quad -((\euD - \sample{\euX \starM \euY}) \starM \euY^{\rm H})_{ijk}.
        \end{aligned}
    \end{equation}  
    where we have used that $M$ is a unitary matrix. Note that the residual $\euR$ has already been projected by $\sampleOP$.
\end{proof}
We now derive the step sizes used for TASD.
\begin{lemma}
    Let $\eta_x$ and $\eta_y$ be the exact steepest descent step sizes for the directions $-\nabla f_{\euY}(\euX)$ and $-\nabla f_{\euX}(\euY)$ respectively. Then,
    \begin{align}\label{eq:stepsizes2}
            \eta_{\euX} &= \frac{\normf{\nabla f_{\euY} (\euX)}^2}{\normf{\sample{\nabla f_{\euY} (\euX) \starM \euY}}^2} &\eta_{\euY} = \frac{\normf{\nabla f_{\euX}(\euY)}^2}{\normf{\sample{\euX \starM \nabla f_{\euX}(\euY)}}^2}.
    \end{align}
\end{lemma}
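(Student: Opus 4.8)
The plan is to compute the exact line-search step size for the steepest descent update along the direction $-\nabla f_{\euY}(\euX)$ directly, mirroring the matrix ASD derivation but carried out in the tensor inner product space induced by the TIP (\cref{def:TIP,lem:TIP}). First I would fix $\euY$ and write $g(\eta) := f_{\euY}(\euX - \eta\,\nabla f_{\euY}(\euX)) = \tfrac12\normf{\euD - \sample{(\euX - \eta\,\nabla f_{\euY}(\euX))\starM\euY}}^2$. Expanding the $\starM$-product linearly in the first argument, the residual becomes $\euR - \eta\,\sample{\nabla f_{\euY}(\euX)\starM\euY}$, where $\euR = \euD - \sample{\euX\starM\euY}$, so that $g$ is a quadratic in $\eta$: $g(\eta) = \tfrac12\normf{\euR}^2 - \eta\,\langle \euR,\ \sample{\nabla f_{\euY}(\euX)\starM\euY}\rangle_{\mathrm{Re}} + \tfrac{\eta^2}{2}\normf{\sample{\nabla f_{\euY}(\euX)\starM\euY}}^2$, using that $\normf{\cdot}^2 = \langle\cdot,\cdot\rangle$ for unitary $M$ (\cref{lem:TIP}). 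Then I would set $g'(\eta) = 0$ to get $\eta_{\euX} = \langle \euR,\ \sample{\nabla f_{\euY}(\euX)\starM\euY}\rangle / \normf{\sample{\nabla f_{\euY}(\euX)\starM\euY}}^2$ (taking real parts as needed).

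The remaining step is to identify the numerator $\langle \euR,\ \sample{\nabla f_{\euY}(\euX)\starM\euY}\rangle$ with $\normf{\nabla f_{\euY}(\euX)}^2$. Here I would use the projection lemma \eqref{IP_projection}: since $\euR = \sample{\euR}$ is already in the range of $\sampleOP$, we have $\langle \euR,\ \sample{\nabla f_{\euY}(\euX)\starM\euY}\rangle = \langle \euR,\ \nabla f_{\euY}(\euX)\starM\euY\rangle$. Next, an adjoint (``move $\euY^{\rm H}$ across the inner product'') identity for the $\starM$-product is needed: $\langle \euA\starM\euB,\ \euC\rangle = \langle \euA,\ \euC\starM\euB^{\rm H}\rangle$. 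This follows from the definition of the TIP as a t-trace, the cyclic property $\tTr(\euP\starM\euQ) = \tTr(\euQ\starM\euP)$ listed in the appendix, and $(\euA\starM\euB)^{\rm H} = \euB^{\rm H}\starM\euA^{\rm H}$. Applying it, $\langle \euR,\ \nabla f_{\euY}(\euX)\starM\euY\rangle = \langle \euR\starM\euY^{\rm H},\ \nabla f_{\euY}(\euX)\rangle = \langle -\nabla f_{\euY}(\euX),\ \nabla f_{\euY}(\euX)\rangle\cdot(-1) = \normf{\nabla f_{\euY}(\euX)}^2$, recalling $\nabla f_{\euY}(\euX) = -\euR\starM\euY^{\rm H}$ from the previous lemma. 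This gives the claimed formula for $\eta_{\euX}$; the derivation of $\eta_{\euY}$ is identical with the roles swapped and the factor $\euX$ multiplying on the left, using $\nabla f_{\euX}(\euY) = -\euX^{\rm H}\starM\euR$ and the mirror-image adjoint identity $\langle \euA\starM\euB,\ \euC\rangle = \langle \euB,\ \euA^{\rm H}\starM\euC\rangle$.

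I would also note the minor subtlety that the TIP is complex-valued, so strictly $g(\eta)$ for real $\eta$ involves $\mathrm{Re}\langle\cdot,\cdot\rangle$; but since the numerator evaluates to $\normf{\nabla f_{\euY}(\euX)}^2$, which is real and nonnegative, the real-part qualifier is harmless and the stated formula stands. One should also check $g''(\eta) = \normf{\sample{\nabla f_{\euY}(\euX)\starM\euY}}^2 \ge 0$ so this critical point is indeed the minimiser (assuming the denominator is nonzero; otherwise $g$ is constant in $\eta$ and any step, in particular this one interpreted in the limit, is optimal).

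The main obstacle is establishing the $\starM$-product adjoint identity $\langle \euA\starM\euB,\ \euC\rangle = \langle \euA,\ \euC\starM\euB^{\rm H}\rangle$ cleanly; everything else is the routine quadratic minimisation. This identity is most transparent in the transform domain, where the $\starM$-product is slicewise matrix multiplication, the TIP is (a scalar multiple of) $\sum_k \mathrm{tr}(\hat\euA_{:,:,k}\hat\euC_{:,:,k}^{\rm H})$, and the claim reduces to the familiar matrix fact $\mathrm{tr}(AB C^{\rm H}) = \mathrm{tr}(A(CB^{\rm H})^{\rm H})$ applied slice by slice — so I would prove it there and invoke \cref{lem:TIP} to return to the spatial domain.
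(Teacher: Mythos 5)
Your proposal follows essentially the same route as the paper's proof: write the line-search objective $g(\eta)$ as a quadratic in $\eta$ via the TIP, use the projection identity \eqref{IP_projection} together with the fact that the residual $\euR=\euD-\sample{\euX\starM\euY}$ is already supported on $\Omega$, move $\euY^{\rm H}$ across the inner product to recognise $\euR\starM\euY^{\rm H}=-\nabla f_{\euY}(\euX)$, and solve the linear equation $g'(\eta_{\euX})=0$; your extra care in stating and proving the adjoint identity $\langle \euA\starM\euB,\euC\rangle=\langle\euA,\euC\starM\euB^{\rm H}\rangle$ slice-wise in the transform domain, and in noting the real-part and $g''\ge 0$ points, is a welcome tightening of what the paper leaves implicit. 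One bookkeeping slip should be fixed: with the update $\euX-\eta\nabla f_{\euY}(\euX)$ the perturbed residual is
\begin{equation*}
\euD-\sample{(\euX-\eta\nabla f_{\euY}(\euX))\starM\euY}=\euR+\eta\,\sample{\nabla f_{\euY}(\euX)\starM\euY},
\end{equation*}
with a plus sign, not the minus sign you wrote; consequently the critical point is $\eta_{\euX}=-\langle\euR,\sample{\nabla f_{\euY}(\euX)\starM\euY}\rangle/\normf{\sample{\nabla f_{\euY}(\euX)\starM\euY}}^2$, and since $\langle\euR\starM\euY^{\rm H},\nabla f_{\euY}(\euX)\rangle=\langle-\nabla f_{\euY}(\euX),\nabla f_{\euY}(\euX)\rangle=-\normf{\nabla f_{\euY}(\euX)}^2$, the minus signs cancel to give the stated formula. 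As written, your chain compensates for the wrong residual sign by inserting an unexplained factor $(-1)$ in the step $\langle\euR\starM\euY^{\rm H},\nabla f_{\euY}(\euX)\rangle=\langle-\nabla f_{\euY}(\euX),\nabla f_{\euY}(\euX)\rangle\cdot(-1)$; once the residual expansion is corrected, that factor arises legitimately from $g'(\eta)=0$ and the argument is sound, matching the paper's derivation.
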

\begin{proof}
    We only provide the proof for $\eta_{\euX}$, the proof for $\eta_{\euY}$ is similar. Let $g:\R \rightarrow \R$ describe the square error of the iterates as we move along the step direction. We wish to minimise this function over $\eta$,
    \begin{equation}
        \eta_{\euX} = \argmin_{\eta}{g(\eta)}\quad \text{where} \quad  g(\eta) = \normf{\euD - \sample{(\euX - \eta\nabla f_{\euY}(\euX)) \starM \euY}}^2.
    \end{equation}
    To solve this, we simply set the derivative to 0 and compute $g'(\eta_{\euX}) = 0$. By writing $g$ as an inner product, see definition \ref{def:TIP}, we have that 
    \begin{equation}
        g'(\eta_{\euX}) = \big\langle \euD -\sample{(\euX-\eta_{\euX} \nabla f_{\euY}(\euX))\starM\euY} , \sample{\eta_x \nabla f_{\euY} (\euX)\starM\euY }\big\rangle.
    \end{equation}
    Thus, we compute,
    \begin{align*}
        g'(\eta_{\euX}) &= 0\\
        g'(\eta_{\euX}) &= \big \langle \euD - \sample{\euX\starM\euY)},\sample{\nabla f_{\euY}(\euX)\starM\euY} \big \rangle + \big \langle \sample{\eta_{\euX} \nabla f_{\euY}(\euX) \starM \euY)},\sample{\nabla f_{\euY}(\euX)\starM\euY} \big \rangle \\
        &= \big \langle (\euD - \sample{\euX\starM\euY)})\starM\euY^{\rm H},\nabla f_{\euY}(\euX) \big \rangle + \eta_{\euX} \big \langle \sample{\nabla f_{\euY}(\euX) \starM \euY)},\sample{\nabla f_{\euY}(\euX)\starM\euY} \big \rangle \\
        &= \big \langle - \nabla f_{\euY}(\euX) ,\nabla f_{\euY}(\euX) \big \rangle + \eta_{\euX} \normf{\sample{\nabla f_{\euY}(\euX) \starM \euY)}}^2\\
        &= - \normf{\nabla f_{\euY}(\euX)}^2 + \eta_{\euX} \normf{\sample{\nabla f_{\euY}(\euX) \starM \euY)}}^2\\
    \end{align*}
    Where we have used the linearity of the t-product and the distribution of orthogonal projections over the TIP. Note that in the third line, the left expression of the first inner product is the definition of the gradient. By rearranging the final equation, we get the desired expression for $\eta_{\euX}$.    
\end{proof}

\section{Spectral Structure of x-ray spectromicroscopy data}\label{sec:specstruc}

We use our knowledge of the spectral structure of x-ray spectromicroscopy to customise TASDII and improve the completion results. This is simply an example of the choices that can be made, and is in no way prescriptive. Further study of other data sets may yield other patterns that can be taken advantage off, whether using different transforms or focusing on completing different frontal slices.

\begin{figure}[htb]
    \begin{subfigure}{0.4\textwidth}
        \centering  
        \includegraphics[width = \textwidth]{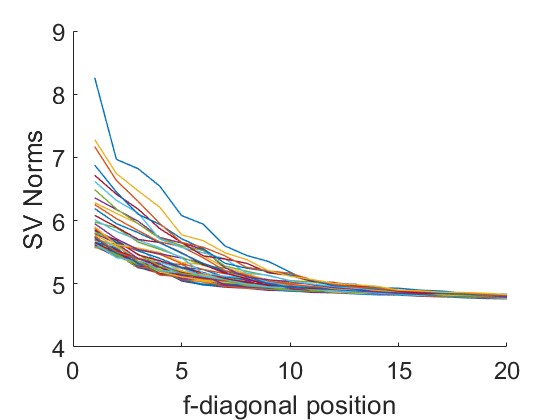}
        \caption{SVs of all slices}
        \label{fig:DS1SVall}
    \end{subfigure}
    \begin{subfigure}{0.4\textwidth}
        \centering  
        \includegraphics[width = \textwidth]{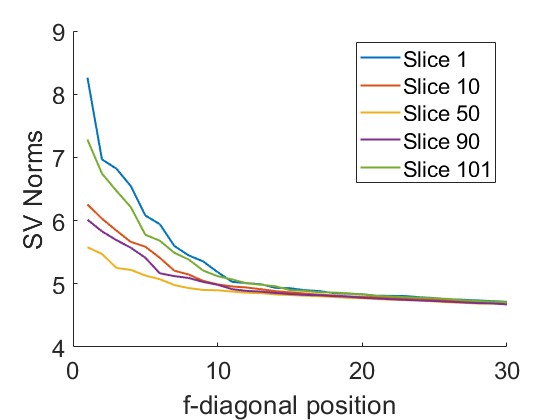}
        \caption{SVs of select slices}
        \label{fig:DS1SVsome}
    \end{subfigure}
    
    \begin{subfigure}{0.4\textwidth}
        \centering  
        \includegraphics[width = \textwidth]{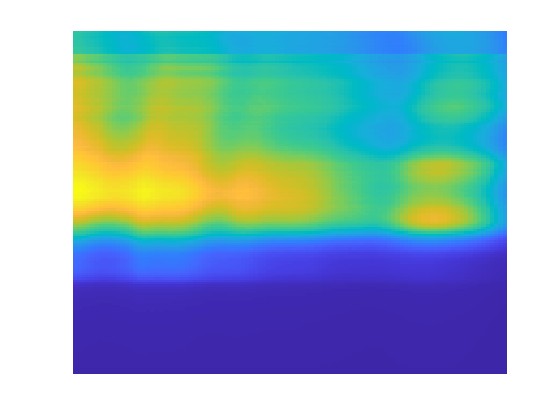}
        \caption{Image of frontal slice 1, $\hat{A}_{:,:,1}$}
        \label{fig:DS1hat1}
    \end{subfigure}
    \begin{subfigure}{0.4\textwidth}
        \centering  
        \includegraphics[width = \textwidth]{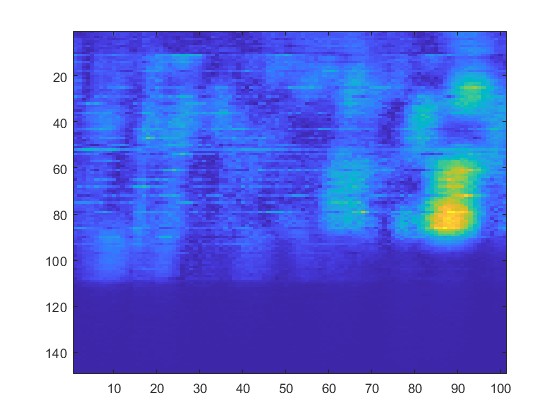}
        \caption{Image of frontal slice 50, $\hat{A}_{:,:,50}$}
        \label{fig:DS1hat50}
    \end{subfigure}

    \caption{First singular values of all slices of the transformed tensor DS1 (a), singular values of selected slices of the same tensor (b), the first slice $\hat{\euA}(:,:,1)$ (c), and the fiftieth slice $\hat{\euA}(:,:,50)$ (d) of the transformed DS1.}
    \label{fig:DS1SVimages}
\end{figure}

The spectral breakdown of the dataset DS1 is shown in Figure~\ref{fig:DS1SVimages}.
We see that following the Fourier transform most of the spectral information is concentrated in only a few slices. This is evident as only a handful of slices contain singular values greater than $10^6$, and only 5 slices contain singular values greater than $10^7$. These more significant slices still decay quickly to the relative noise level within the first 10 positions, while the vast majority of slices show very little variation in the singular values. In Figure~\ref{fig:DS1SVsome}, we have isolated a smaller number of slices to demonstrate that it is the outer slices that contain spectral information, while the middle slices are less significant.

Since we're using the Fourier transform, the outer slices describe the low frequency components of the data, while middle slices contain high frequency components, mostly noise. This can be seen in Figures~\ref{fig:DS1hat1} and Figure~\ref{fig:DS1hat50}, where we have plotted the first and fiftieth transformed frontal slices of DS1. It is clear that only the outer slices contain useful information.

From a completion perspective, the outer slices are also more amenable to low rank completion. Due to the decay in Singular values, these slices are fairly ill conditioned, however in this case we can consider this something of a good thing. The decay and ill-conditioning mean that most of the variation of these slices are contained in only a few components, so that low rank approximations exist with very small error. This opens the door for more accurate low rank completions (the ill conditioning will be dealt with by LoopedASD).

Contrary to this, the middle noisy slices are better conditioned, but with such little variation in the singular values, the accuracy of low rank completions is bounded by the inaccuracy of the optimal low rank approximation (according to the Eckart-Young theorem). In fact, during preliminary testing we found that the middle slice would not complete at all and in many cases, the relative error of certain slices was greater than 1. Cutting these slices out by setting them to zero improved the completion overall. 

To avoid any such slices not being set to zero, we ensured that slices surrounded by zero slices were also set to zero. This removed any spurious, overfit components, but did no affect the more significant outer slices.

\section*{Acknowledgments}
OT is partially supported by a scholarship from the EPSRC Centre for Doctoral Training
in Statistical Applied Mathematics at Bath (SAMBa), under the project EP/S022945/1. 
OT, SD and SG acknowledge Diamond Light Source for time on Beamline/Lab I14 under Proposal MG31039, and Paul Quinn for discussions about X-ray spectromicroscopy applications. OT, SD and SG gratefully acknowledge the University of Bath’s Research Computing Group \cite{bath_computing} for their support in this work. OT, SG and MK acknowledge the Isaac Newton Institute for Mathematical Sciences, Cambridge, for the support and hospitality during the programme ``Rich and Nonlinear Tomography - a multidisciplinary approach" (supported by EPSRC grant no EP/R014604/), where initial work on this paper was undertaken. 

\bibliographystyle{siamplain}
\bibliography{references}
\end{document}